\documentclass[11pt]{amsart}

\usepackage{graphics}
\usepackage{color}
\usepackage[a4paper, margin=3cm]{geometry}

\usepackage{amssymb,enumerate}
\usepackage{amsmath}
\usepackage{bbm}           
\usepackage{bm}
\usepackage{eso-pic,graphicx}
\usepackage{tikz}
\usepackage{cite}
\usepackage{esint}
\usepackage[colorlinks=true, pdfstartview=FitV, linkcolor=blue, citecolor=blue, urlcolor=blue]{hyperref}

\usepackage{graphicx}
\usepackage{pslatex}
\usepackage{amsmath,amsfonts}
\usepackage{rotating}
\usepackage{amssymb}
\usepackage{verbatim}
\usepackage{rotating}
\usepackage{mathrsfs}

\makeatletter
\def\Ddots{\mathinner{\mkern1mu\raise\p@
\vbox{\kern7\p@\hbox{.}}\mkern2mu
\raise4\p@\hbox{.}\mkern2mu\raise7\p@\hbox{.}\mkern1mu}}
\makeatother

\def\XXint#1#2#3{{\setbox0=\hbox{$#1{#2#3}{\int}$}
\vcenter{\hbox{$#2#3$}}\kern-.5\wd0}}

\begin{document}
\newtheorem{theorem}{Theorem}
\newtheorem{proposition}[theorem]{Proposition}
\newtheorem{conjecture}[theorem]{Conjecture}
\def\theconjecture{\unskip}
\newtheorem{corollary}[theorem]{Corollary}
\newtheorem{lemma}[theorem]{Lemma}
\newtheorem{claim}[theorem]{Claim}
\newtheorem{sublemma}[theorem]{Sublemma}
\newtheorem{observation}[theorem]{Observation}
\theoremstyle{definition}
\newtheorem{definition}{Definition}
\newtheorem{notation}[definition]{Notation}
\newtheorem{remark}[definition]{Remark}
\newtheorem{question}[definition]{Question}
\newtheorem{questions}[definition]{Questions}
\newtheorem{example}[definition]{Example}
\newtheorem{problem}[definition]{Problem}
\newtheorem{exercise}[definition]{Exercise}
 \newtheorem{thm}{Theorem}
 \newtheorem{cor}[thm]{Corollary}
 \newtheorem{lem}{Lemma}[section]
 \newtheorem{prop}[thm]{Proposition}
 \theoremstyle{definition}
 \newtheorem{dfn}[thm]{Definition}
 \theoremstyle{remark}
 \newtheorem{rem}{Remark}
 \newtheorem{ex}{Example}
 \numberwithin{equation}{section}
\def\C{\mathbb{C}}
\def\R{\mathbb{R}}
\def\Rn{{\mathbb{R}^n}}
\def\Rns{{\mathbb{R}^{n+1}}}
\def\Sn{{{S}^{n-1}}}
\def\M{\mathbb{M}}
\def\N{\mathbb{N}}
\def\Q{{\mathbb{Q}}}
\def\Z{\mathbb{Z}}
\def\F{\mathcal{F}}
\def\L{\mathcal{L}}
\def\S{\mathcal{S}}
\def\supp{\operatorname{supp}}
\def\essi{\operatornamewithlimits{ess\,inf}}
\def\esss{\operatornamewithlimits{ess\,sup}}

\numberwithin{equation}{section}
\numberwithin{thm}{section}
\numberwithin{theorem}{section}
\numberwithin{definition}{section}
\numberwithin{equation}{section}

\def\earrow{{\mathbf e}}
\def\rarrow{{\mathbf r}}
\def\uarrow{{\mathbf u}}
\def\varrow{{\mathbf V}}
\def\tpar{T_{\rm par}}
\def\apar{A_{\rm par}}

\def\reals{{\mathbb R}}
\def\torus{{\mathbb T}}
\def\scriptm{{\mathcal T}}
\def\heis{{\mathbb H}}
\def\integers{{\mathbb Z}}
\def\z{{\mathbb Z}}
\def\naturals{{\mathbb N}}
\def\complex{{\mathbb C}\/}
\def\distance{\operatorname{distance}\,}
\def\support{\operatorname{support}\,}
\def\dist{\operatorname{dist}\,}
\def\Span{\operatorname{span}\,}
\def\degree{\operatorname{degree}\,}
\def\kernel{\operatorname{kernel}\,}
\def\dim{\operatorname{dim}\,}
\def\codim{\operatorname{codim}}
\def\trace{\operatorname{trace\,}}
\def\Span{\operatorname{span}\,}
\def\dimension{\operatorname{dimension}\,}
\def\codimension{\operatorname{codimension}\,}
\def\nullspace{\scriptk}
\def\kernel{\operatorname{Ker}}
\def\ZZ{ {\mathbb Z} }
\def\p{\partial}
\def\rp{{ ^{-1} }}
\def\Re{\operatorname{Re\,} }
\def\Im{\operatorname{Im\,} }
\def\ov{\overline}
\def\eps{\varepsilon}
\def\lt{L^2}
\def\diver{\operatorname{div}}
\def\curl{\operatorname{curl}}
\def\etta{\eta}
\newcommand{\norm}[1]{ \|  #1 \|}
\def\expect{\mathbb E}
\def\bull{$\bullet$\ }

\def\blue{\color{blue}}
\def\red{\color{red}}

\def\xone{x_1}
\def\xtwo{x_2}
\def\xq{x_2+x_1^2}
\newcommand{\abr}[1]{ \langle  #1 \rangle}

\newcommand{\Norm}[1]{ \left\|  #1 \right\| }
\newcommand{\set}[1]{ \left\{ #1 \right\} }
\newcommand{\ifou}{\raisebox{-1ex}{$\check{}$}}
\def\one{\mathbf 1}
\def\whole{\mathbf V}
\newcommand{\modulo}[2]{[#1]_{#2}}
\def \essinf{\mathop{\rm essinf}}
\def\scriptf{{\mathcal F}}
\def\scriptg{{\mathcal G}}
\def\scriptm{{\mathcal M}}
\def\scriptb{{\mathcal B}}
\def\scriptc{{\mathcal C}}
\def\scriptt{{\mathcal T}}
\def\scripti{{\mathcal I}}
\def\scripte{{\mathcal E}}
\def\scriptv{{\mathcal V}}
\def\scriptw{{\mathcal W}}
\def\scriptu{{\mathcal U}}
\def\scriptS{{\mathcal S}}
\def\scripta{{\mathcal A}}
\def\scriptr{{\mathcal R}}
\def\scripto{{\mathcal O}}
\def\scripth{{\mathcal H}}
\def\scriptd{{\mathcal D}}
\def\scriptl{{\mathcal L}}
\def\scriptn{{\mathcal N}}
\def\scriptp{{\mathcal P}}
\def\scriptk{{\mathcal K}}
\def\frakv{{\mathfrak V}}
\def\C{\mathbb{C}}
\def\D{\mathcal{D}}
\def\R{\mathbb{R}}
\def\Rn{{\mathbb{R}^n}}
\def\rn{{\mathbb{R}^n}}
\def\Rm{{\mathbb{R}^{2n}}}
\def\r2n{{\mathbb{R}^{2n}}}
\def\Sn{{{S}^{n-1}}}
\def\M{\mathbb{M}}
\def\N{\mathbb{N}}
\def\Q{{\mathcal{Q}}}
\def\Z{\mathbb{Z}}
\def\F{\mathcal{F}}
\def\L{\mathcal{L}}
\def\G{\mathscr{G}}
\def\ch{\operatorname{ch}}
\def\supp{\operatorname{supp}}
\def\dist{\operatorname{dist}}
\def\essi{\operatornamewithlimits{ess\,inf}}
\def\esss{\operatornamewithlimits{ess\,sup}}
\def\dis{\displaystyle}
\def\dsum{\displaystyle\sum}
\def\dint{\displaystyle\int}
\def\dfrac{\displaystyle\frac}
\def\dsup{\displaystyle\sup}
\def\dlim{\displaystyle\lim}
\def\bom{\Omega}
\def\om{\omega}
\def\BMO{\rm BMO}
\def\CMO{\rm CMO}

\author[S. Wang]{Shifen Wang}
\address{Shifen Wang:
	School of Mathematical Sciences \\
	Beijing Normal University \\
	Beijing 100875 \\
	People's Republic of China}
\email{wsfrong@mail.bnu.edu.cn}

\author[Q. Xue]{Qingying Xue$^{*}$}
\address{Qingying Xue:
	School of Mathematical Sciences \\
	Beijing Normal University \\
	Laboratory of Mathematics and Complex Systems \\
	Ministry of Education \\
	Beijing 100875 \\
	People's Republic of China}
\email{qyxue@bnu.edu.cn}
\author[C. Zhang]{Chunmei Zhang}
\address{Chunmei Zhang:
	School of Mathematical Sciences \\
	Beijing Normal University \\
	Beijing 100875 \\
	People's Republic of China}
\email{chmeizhang@mail.bnu.edu.cn}
\keywords{Schr\"{o}dinger operator, Littlewood-Paley function, semi-group maximal operator, commutator, compactness.\\
\indent{\it {2010 Mathematics Subject Classification.}} Primary 42B25,
Secondary 35J10.}

\thanks{ The second author was supported partly by the National Key Research and Development Program of China (Grant No. 2020YFA0712900) and NSFC (No. 11871101).
\thanks{$^{*}$ Corresponding author, e-mail address: qyxue@bnu.edu.cn}}

\date{\today}
\title[ on weighted Compactness of Commutators ]
{\bf On weighted Compactness of Commutators of square function and semi-group maximal function associated to Schr\"odinger operators}

\begin{abstract}
In this paper, the object of our investigation is the following Littlewood-Paley square function $g$ associated with the Schr\"odinger operator $L=-\Delta+V$ defined by:
\begin{equation*}\label{1.1}
	g(f)(x)=\Big(\int_{0}^{\infty}\Big|\frac{d}{dt}e^{-tL}(f)(x)\Big|^2tdt\Big)^{1/2}.
\end{equation*}
where $\Delta$ is the laplacian operator on $\mathbb{R}^n$ and $V$is a nonnegative potential. We show that the commutators of $g$ are compact operators from $L^p(w)$ to $L^p(w)$  for $1<p<\infty$ if $b\in \CMO_\theta(\rho)$ and $w\in A_p^{\rho,\theta}$, where $\CMO_\theta(\rho)$ is the closure of $\mathcal{C}_c^\infty(\mathbb{R}^n)$ in the $\BMO_\theta(\rho)$ topology which is more larger than the classical $\CMO $ space  and $A_p^{\rho,\theta}$ is a weights class which is more larger than Muckenhoupt $A_p$ weight class. An extra weight condition in a privious weighted compactness result is removed for the commutators of the semi-group maximal function defined by $\mathcal{T}^*(f)(x)=\sup_{t>0}|e^{-tL}f(x)|.$

\end{abstract}\maketitle

\section{Introduction}

\medskip
This paper is devoted to study the weighted compactness of the commutators of the Littlewood-Paley square function $g$ associated with the Schr\"odinger operator $L=-\Delta+V$, where $\Delta$ is the well-known Laplacian operator and the function $V$ is a nonnegative potential enjoying certain reverse H\"{o}lder inequality in $\mathbb{R}^n (n\geq 3$).

It is well-known that the Schr\"{o}dinger operator $L$ plays a fundamental role in Harmonic analysis and PDE \cite{S}. In the past few decades, Great achievements have been made in the study of the operator $L$ and its associated operators such as Riesz transforms $\nabla L^{-1/2}$. It is worthy to pointing out that the
estimates of the solutions of $L$ and Riesz transforms $\nabla L^{-1/2}$ were given in \cite{F,S1,Z}. Great interests have been shown to investigate the properties of Hardy type spaces associated with $L$, such like, the Hardy space $H^p_L$  \cite{DZ2,DZ} for $0<p\leq 1$; the local Hardy spaces $h_L$ \cite{YYZ}. The semigroup maximal function $e^{-tL}(t>0)$ and the Riesz transforms $\nabla L^{-1/2}$ on  Heisenberg groups were further considered in \cite{LLLY}.

In order to state more known results, we need to introduce some notations and definitions. Let $V$ be a nonnnegative function defined on $\rn.$ For every ball $B(x,r)\subset\mathbb{R}^n$, we say $V$ is in the reverse H\"{o}lder class $RH_q$ ($q>n/2$), if there exists a constant $C>0$ such that $V$ enjoys the following  reverse H\"{o}lder inequality
\begin{equation*}
	\Big(\frac{1}{|B(x,r)|}\int_{B(x,r)}V^q(y)dy\Big)^{\frac{1}{q}}\leq C\Big(\frac{1}{|B(x,r)|}\int_{B(x,r)}V(y)dy\Big).
\end{equation*}

We need to give the definition of a class of more generalized $\BMO$ space associated with $\rho.$

\begin{definition}{\bf ($\BMO_\theta(\rho)(\rn)$ space, \cite{BHS})}. For $\theta>0$,
	we defined the class $\BMO_\theta(\rho)(\rn)$ of locally integrable functions $f$ such that
	\begin{equation}\label{1.5}
		\frac{1}{|B(x,r)|}\int_{B(x,r)}|f(y)-f_B|dy\leq C\Big(1+\frac{r}{\rho(x)}\Big)^{\theta },
	\end{equation}
	for all $x\in\mathbb{R}^n$ and $r>0$, where $f_B=\frac{1}{|B|}\int_Bf$ and  the function $\rho$ \cite{S1} associated with $V(x)\in RH_q $ is defined by
	\begin{equation}\label{1.3}
		\rho(x)=\sup_{r>0}\Big\{r:\frac{1}{r^{n-2}}\int_{B(x,r)}V(y)dy\leq1\Big\}.
	\end{equation} For a function $f\in \BMO_\theta(\rho)(\rn)$, the infimum of
	the constants in \eqref{1.5} is denoted by $\|f\|_{\BMO_\theta(\rho)}$.
	
\end{definition}
We now need tointroduce a more larger weights class than the classical $A_p$ weights class as follows.
\begin{definition}{\bf ($A_p^{\rho,\theta}$ weights class, \cite{BHS1})}.\label{def1.2}
	Let $w$ be a nonnegative, locally integrable function on
	$\mathbb{R}^n$. For $1<p<\infty$,  a weight $w$ is said in the class $A_p^{\rho,\theta}$ if there exists a positive constant $C$ such that for all balls $B=B(x,r)$, it holds that
	\begin{equation}\label{1.4}
		\Big(\frac{1}{|B|}\int_Bw(y)dy\Big)\Big(\frac{1}{|B|}\int_Bw(y)^{-1/(p-1)}dy\Big)^{p-1}\leq C\Big(1+\frac{r}{\rho(x)}\Big)^{\theta p}.
	\end{equation}
\end{definition}
\noindent $w\in A_1^{\rho,\theta}$ means that there exists a constant $C$ such that for all balls $B$, the following inequality holds
\begin{equation*}
	M^\theta_V(w)(x)\leq Cw(x),\ a.e. \ x\in\mathbb{R}^n,
\end{equation*}
where
$
M^\theta_V(f)(x)=\sup_{x\in B}\frac{1}{\Psi_\theta(B)|B|}\int_B|f(y)|dy
$ and
$\Psi_\theta(B)=(1+r/\rho(x))^\theta$ for $B=B(x,r)$.

\remark It is easy to see that $\BMO(\rn)\subset \BMO_\theta(\rho)(\rn)\subset \BMO_{\theta'}(\rho)(\rn)$ for $0<\theta<\theta'$, $A_p\subsetneq  A_p^{\rho,\theta}$ and the classes $A_p^{\rho,\theta}$ are increasing with $\theta$ for $1\leq p<\infty $. 
\vspace{0.2cm}

The Littlewood-Paley $g$ function and the semi-group maximal function associated to Schr\"odinger operator $L$ are defined by
\begin{equation}\label{1.1}
	g(f)(x)=\Big(\int_{0}^{\infty}\Big|\frac{d}{dt}e^{-tL}(f)(x)\Big|^2tdt\Big)^{1/2},
\end{equation}
and 
\begin{equation}\label{1.2}
	\mathcal{T}^*(f)(x)=\sup_{t>0}|e^{-tL}f(x)|=\sup_{t>0}\Big|\int_{\mathbb{R}^n}k_t(x,y)f(y)dy\Big|,
\end{equation}
where $k_t$ is the kernel of the operator $e^{-tL}$.
The commutators of $g$ and $\mathcal{T}^*$ with $b\in\BMO_\theta(\rho)(\rn)$ are defined by
\begin{equation}\label{1.6}
	g_b(f)(x)=\Big(\int_{0}^{\infty}\Big|\frac{d}{dt}e^{-tL}(b(x)-b(\cdot))(f)(x)\Big|^2tdt\Big)^{1/2};
\end{equation}
and
\begin{equation}\label{1.7}
	\mathcal{T}_b^*(f)(x)=\sup_{t>0}\Big|\int_{\mathbb{R}^n}k_t(x,y)(b(x)-b(y))f(y)dy\Big|.
\end{equation}

Recall that, in 2011, Bongioanni, Harboure and Salinas \cite{BHS} first considered the strong boundedness of the commutators of Riesz transforms related to $L$ with $\BMO_\theta(\rho)(\rn)$ functions. Weighted estimates were demonstrated in \cite{BHS1} for $g$ function and $	\mathcal{T}^*$ related to $L$ with  $A_p^{\rho,\theta}$ weights as follows:

\medskip
\quad\hspace{-20pt}{\bf Theorem A} (\cite{BHS1}). {\it  For $1<p<\infty$, the operator $g$ and $	\mathcal{T}^*$ are bounded on $L^p(w)$ when $w\in A_p^{\rho,\theta}$.}
\vspace{0.2cm}

In another paper, Tang \cite{T} considered the weighted norm inequalities for commutators of $g$ function and the maximal function  $\mathcal{T}^*$ associated with Schr\"odinger operators $L.$

\medskip
\quad\hspace{-20pt}{\bf Theorem B} (\cite{T}). {\it  Let $1<p<\infty$, $w\in A_p^{\rho,\theta}$ and $b\in\BMO_\theta(\rho)$, then there exists a constant $C$ such that
	\begin{equation*}
		\|g_b(f)\|_{L^p(w)}\leq C\|b\|_{\BMO_\theta(\rho)}\|f\|_{L^p(w)},\quad
		\|{T}_b^*(f)\|_{L^p(w)}\leq C\|b\|_{\BMO_\theta(\rho)}\|f\|_{L^p(w)}.
\end{equation*}}
\smallskip

As was said before, this paper is devoted to studying the weighted compactness for commutators of
Littlewood-Paley function related to Schr\"odinger operator $L$. Recall that, a linear or sublinear operator $T$ is said to be a compact operator from a Banach space $X$ to a Banach space $Y$, if the image under $T$ of any bounded subset in $X$ is a relatively compact subset in $Y.$ Given a locally integrable function $b$,
the commutator $[b,T]$ of $T$ is given by $[b,T](f)(x)=bTf(x)-T(bf)(x).$ 

We now present some known result about the compactness of some classical operators.
In 1975, Corders \cite{C} first showed that the commutators of the Fourier multipliers and pseudodifferential operators are compact on $L^p(\rn)$ provided $b\in{\rm CMO}(\mathbb{R}^n)$, where ${\rm CMO}(\mathbb{R}^n)$ is the closure of $C_c^{\infty}$ in the $\BMO(\mathbb{R}^n)$ topology. In 1978, Uchiyama \cite{Uch}
surprisingly proved that the commutator of singular intergral opertors with $ {\rm Lip}_1({\rm S}^{n-1})$ kernels
is compact on $L^p(\mathbb{R}^n)$ for $1<p<\infty$ if and only if
$b\in{\rm CMO}(\mathbb{R}^n)$.
Since then, the study on the compactness of commutators of different operators
has attracted much more attention. We refer the reader to \cite{KL1,KL2,W,CD1,CD2,CDW,LWX} for the compactness of commutators of
linear operators, 
\cite{BT,BDMT,XYY,TX,TXYY} for multilinear operators and \cite{COY1,H1,H2,H3} for extrapolation theory of compactness. In particular, in a recent paper , the result of Corders \cite{C} was extended to the bilinear case and the the condition $b\in \CMO(\rn) $ was improved to a more bigger space which is called $\rm {XMO}(\rn)$.

Note that these compactness results are all concerned with the space $\text {CMO}(\mathbb{R}^n)$. However, Theorem B shows that the $L^p$ boundedness holds for more larger space $\text{BMO}_\theta(\rho)(\mathbb{R}^n)$, rather than $\text{BMO}(\mathbb{R}^n)$ and the weights class $A_p^{\rho,\theta}$ is more larger than $A_p$ weights class. Let $\text{ CMO}_\theta(\rho)(\mathbb{R}^n)$ be the closure of $\mathcal{C}_c^\infty(\mathbb{R}^n)$ in the $\BMO_\theta(\rho)(\mathbb{R}^n)$ topology.  Then, it is quite natural to ask the following question:

\begin{question}
	Whether the operator $g_b$ is compact from  $L^p(w)$ to $L^p(w)$ when $w\in A_p^{\rho,\theta}$ and $b$ belongs to the space $\text{ CMO}_\theta(\rho)(\mathbb{R}^n)$?
\end{question}

One may ask the same question for the operator $\mathcal{T}^*_b$. Very recently,  we obtained the following weighted compactness for  $\mathcal{T}^*_b$ in \cite{WX}. However, an extra weights condition needs to be assumed. 
\medskip
\quad\hspace{-20pt}{\bf Theorem C} (\cite{WX}). {
	Let $1<p<\infty$, $w\in A_p^{\rho,\theta}$ and $b\in\CMO_\theta(\rho)(\rn)$. If $w$ satisfies the following condition
	\begin{equation}\label{1.8}
		\lim_{A\rightarrow+\infty}A^{-np+n}\int_{|x|>1}\frac{w(Ax)}{{|x|}^{np}}dx=0,
	\end{equation}
	then the operator $\mathcal{T}^*_b$ is a compact operator from $L^p(w)$ to $L^p(w)$.}

Therefore, another question arises naturally:
\begin{question}
	Is it possible to remove condition \eqref{1.8} from Theorem C ?
\end{question}

The main purpose of this paper is to give firm answers to the above questions. Our results are as follows:

\medskip
\begin{theorem}\label{thm1.1}
	Let $1<p<\infty$. If $w\in A_p^{\rho,\theta}$ and $b\in\rm CMO_\theta(\rho)(\mathbb{R}^n)$,
	then the operator $g_b$ is a compact operator from $L^p(w)$ to $L^p(w)$.
\end{theorem}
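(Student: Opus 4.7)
\textbf{Reduction to $b\in\mathcal{C}_c^\infty$.} Since $\CMO_\theta(\rho)(\rn)$ is by definition the closure of $\mathcal{C}_c^\infty(\rn)$ in the $\BMO_\theta(\rho)$ topology, fix $b_j\in\mathcal{C}_c^\infty$ with $\|b-b_j\|_{\BMO_\theta(\rho)}\to 0$. Minkowski's inequality for the $L^2(tdt)$ norm that defines $g$ gives the pointwise sublinear bound $|g_b(f)-g_{b_j}(f)|\le g_{b-b_j}(f)$, which together with Theorem B yields
\begin{equation*}
\sup_{\|f\|_{L^p(w)}\le 1}\|g_b(f)-g_{b_j}(f)\|_{L^p(w)}\le C\|b-b_j\|_{\BMO_\theta(\rho)}\longrightarrow 0.
\end{equation*}
Since the class of sublinear operators sending the unit ball of $L^p(w)$ to a relatively compact set of $L^p(w)$ is closed under this uniform convergence (standard diagonal extraction applied to $\{g_{b_j}(f_n)\}_{n,j}$), it suffices to prove that $g_b$ is compact on $L^p(w)$ whenever $b\in\mathcal{C}_c^\infty(\rn)$, with $\supp b\subset B(0,R_0)$ say.

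\textbf{Kolmogorov--Riesz framework.} For $b\in\mathcal{C}_c^\infty$, I would verify, via a version of the Kolmogorov--Riesz compactness criterion in $L^p(w)$ valid for $A_p^{\rho,\theta}$ (which follows by truncation from the fact that such a $w$ is locally $A_p$), that $\mathcal{F}:=\{g_b(f):\|f\|_{L^p(w)}\le 1\}$ satisfies: (i) $\mathcal{F}$ is bounded in $L^p(w)$; (ii) $\lim_{R\to\infty}\sup_{h\in\mathcal{F}}\|h\one_{\{|x|>R\}}\|_{L^p(w)}=0$; and (iii) $\lim_{|y|\to 0}\sup_{h\in\mathcal{F}}\|h(\cdot+y)-h(\cdot)\|_{L^p(w)}=0$. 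Condition (i) is Theorem B. For (ii) we exploit $b(x)=0$ when $|x|>R_0$, so the integrand in
\begin{equation*}
g_b(f)(x)=\Big(\int_0^\infty\Big|\int_{|z|\le R_0}b(z)f(z)\,\partial_t k_t(x,z)\,dz\Big|^2 t\,dt\Big)^{1/2}
\end{equation*}
is supported where $|x-z|\ge|x|/2$; the Dziubański--Zienkiewicz-type kernel estimate $|\partial_t k_t(x,z)|\le C_N t^{-n/2-1}(1+\sqrt t/\rho(x)+\sqrt t/\rho(z))^{-N}e^{-c|x-z|^2/t}$, after converting Gaussian decay to polynomial decay in $|x|/\rho(x)$ and performing the $t$-integration, yields a pointwise bound $g_b(f)(x)\lesssim(1+|x|/\rho(0))^{-N}M(bf)(x)$ for $|x|$ large. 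Combined with the polynomial weight-growth bound $w(B(x,r))\lesssim(1+r/\rho(x))^{\theta p}w(B(x,\rho(x)))$ inherent in the $A_p^{\rho,\theta}$ definition, choosing $N$ large enough depending on $p,\theta,n$ delivers (ii).

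\textbf{Equicontinuity and main obstacle.} For (iii), split
\begin{equation*}
|g_b(f)(x+y)-g_b(f)(x)|\le|b(x+y)-b(x)|\,g(f)(x+y)+\Big(\int_0^\infty|I_t(x+y;f)-I_t(x;f)|^2 t\,dt\Big)^{1/2},
\end{equation*}
where $I_t(x;f)=\int\partial_t k_t(x,z)b(z)f(z)\,dz$. The first term is bounded by $\|\nabla b\|_\infty|y|\cdot g(f)(x+y)$ and vanishes in $L^p(w)$ as $|y|\to 0$ via dominated convergence and Theorem A. The second term is handled by a Lipschitz-in-$x$ estimate for $\partial_t k_t$, of the type $|\nabla_x\partial_t k_t(x,z)|\lesssim t^{-n/2-3/2}e^{-c|x-z|^2/t}$, combined with truncation of the $z$-integral at $|x-z|\sim|y|^\delta$ for a suitable $\delta\in(0,1)$, producing $o(1)$ uniformly in $f$. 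The main obstacle is to execute (ii) \emph{without} invoking the extra weight hypothesis \eqref{1.8} that was imposed for $\mathcal{T}_b^*$ in \cite{WX}: one must fully capitalize on the additional $\sqrt t$ factor present in the definition of $g_b$ and on the sharp $(1+\sqrt t/\rho(x))^{-N}$ decay of the derivative kernel, via a $\rho$-adapted annular decomposition of $\{|x|>R\}$ and careful tracking of the interplay between $\theta$, $N$, and the weight growth exponent. This sharper off-diagonal accounting — unavailable in the pointwise supremum setting of $\mathcal{T}_b^*$ prior to removal of \eqref{1.8} — is what allows us to dispense with the auxiliary weight condition.
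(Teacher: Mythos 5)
Your overall architecture (reduction to $b\in\mathcal{C}_c^\infty(\rn)$ via Theorem B, then a weighted Fr\'echet--Kolmogorov criterion with conditions (i)--(iii)) matches the paper's, and your treatment of the decay condition (ii) is essentially the paper's Claim: the kernel bound of Lemma \ref{lem2.5}, converted to $\frac{C_N}{|x-y|^n}(1+|x-y|/\rho(x))^{-N}$ after the $t$-integration, combined with Lemma \ref{lem2.1} and the $A_p^{\rho,\theta}$ comparison of Lemma \ref{lem2.2}(iv); that part is fine (and note you do not even need $M(bf)$ there, nor the unproved assertion that $A_p^{\rho,\theta}$ weights are ``locally $A_p$'' --- Lemma \ref{lem2.8} only requires $w$ and $w^{-1/(p_0-1)}$ to be weights).

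The genuine gap is in the equicontinuity step (iii). You invoke a Lipschitz bound $|\nabla_x\partial_t k_t(x,z)|\lesssim t^{-n/2-3/2}e^{-c|x-z|^2/t}$, but for $L=-\Delta+V$ with $V\in RH_q$, $q>n/2$, no such gradient estimate is available: the known regularity (Lemma \ref{lem2.6}, from Dziuba\'nski--Garrig\'os--Torrea--Zienkiewicz) is only a H\"older estimate of some order $\delta\in(0,1)$, $|q_t(x+h,y)-q_t(x,y)|\le C_N t^{-n/2-1}(|h|/\sqrt t)^\delta e^{-c|x-y|^2/t}(1+\sqrt t/\rho(x)+\sqrt t/\rho(y))^{-N}$, and it holds only in the regime $|h|\le\sqrt t$. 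Your argument neither replaces the gradient bound by this H\"older bound nor says anything about the region $t<|h|^2$, where no smoothness of the kernel can be used and where the ad hoc truncation of the $z$-integral at $|x-z|\sim|y|^\delta$ does not help. This is precisely the obstacle the paper's proof is built around: it introduces the smooth truncation $q_{t,\gamma}(x,y)=q_t(x,y)(1-\varphi(\sqrt t/\gamma))$, proves the operator-norm approximation $\|g_{b,\gamma}-g_b\|_{L^p(w)\to L^p(w)}\le C\gamma$ (using $|b(x)-b(y)|\lesssim\sqrt t$ and $M_{V,\eta}$, Lemma \ref{lem2.4}), invokes Lemma \ref{lem2.7} so that compactness of $g_{b,\gamma}$ suffices, and then in (iii) takes $\gamma=|h|$ so that the truncated kernel vanishes for $t<|h|^2$ and Lemma \ref{lem2.6} is applicable throughout; the remaining analysis is the splitting $L_{11},L_{12},L_{13},L_{14}$ controlled by $M_{V,\eta}f$. (One could alternatively split the $t$-integral at $t=|h|^2$ and estimate the small-$t$ piece by size alone, but some such device is indispensable and is absent from your proposal.) Two further slips in (iii): your two-term splitting loses the contribution in which the kernel difference is paired with $(b(x)-b(z))f(z)$ (equivalently the term $b(x)\big(\frac{d}{dt}e^{-tL}f(x+y)-\frac{d}{dt}e^{-tL}f(x)\big)$), so the stated bound is not a valid majorization as written --- the paper avoids this by freezing $b$ at $x+h$ in the kernel-difference term; and your first term $|b(x+y)-b(x)|\,g(f)(x+y)$ is evaluated at the translated point, so its $L^p(w)$-norm involves the translated weight $w(\cdot-y)$, which is not controlled by $w$ for $w\in A_p^{\rho,\theta}$; the paper's decomposition produces instead $|b(x+h)-b(x)|\,g_\gamma(f)(x)$, evaluated at $x$, for which Theorem A applies directly.
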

\begin{theorem}\label{thm1.2}
	Let $1<p<\infty$. If $w\in A_p^{\rho,\theta}$ and $b\in\rm CMO_\theta(\rho)(\mathbb{R}^n)$,
	then the operator $\mathcal{T}^*_b$ defined by \eqref{1.7} is a compact operator from $L^p(w)$ to $L^p(w)$.
\end{theorem}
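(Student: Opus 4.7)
The plan is to derive Theorem~\ref{thm1.2} from the weighted Kolmogorov--Riesz compactness criterion, exploiting the $\rho$-localized decay of the Schr\"odinger heat kernel to overcome precisely the tightness obstruction that forced \cite{WX} to assume the extra weight condition~\eqref{1.8}. First, since $b\in\CMO_\theta(\rho)(\mathbb{R}^n)$ by definition is a $\BMO_\theta(\rho)$-limit of functions $b_k\in\mathcal{C}_c^\infty(\mathbb{R}^n)$, Theorem~B applied to $b-b_k$ yields $\|\mathcal{T}^*_b-\mathcal{T}^*_{b_k}\|_{L^p(w)\to L^p(w)}\le C\|b-b_k\|_{\BMO_\theta(\rho)}\to 0$; since the compact operators are closed in the operator norm, it will suffice to prove the compactness of $\mathcal{T}^*_b$ for each fixed $b\in\mathcal{C}_c^\infty(\mathbb{R}^n)$, say with $\supp b\subset B(0,R)$.

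With this reduction in hand, I will let $\mathcal{F}=\{\mathcal{T}^*_b f : \|f\|_{L^p(w)}\le 1\}$ and verify the three standard conditions for precompactness in $L^p(w)$: (a) uniform boundedness, (b) equicontinuity under translations, and (c) uniform tightness at infinity. Condition (a) is just Theorem~B. For (b), I will combine the H\"older-in-$x$ regularity of the Schr\"odinger heat kernel, which gives $|k_t(x+h,y)-k_t(x,y)|\lesssim (|h|/\sqrt t)^{\delta}$ times a Gaussian bound on $k_{2t}(x,y)$ for $|h|\le\sqrt t$, with the mean-value bound $|b(x+h)-b(x)|\le\|\nabla b\|_\infty|h|$ for smooth $b$. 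Splitting the translation difference accordingly, one obtains $|\mathcal{T}^*_b f(x+h)-\mathcal{T}^*_b f(x)|\le \omega(|h|)\bigl[\mathcal{T}^*f(x)+M^\theta_V f(x)\bigr]$ with $\omega(|h|)\to 0$, and the $L^p(w)$-boundedness of $\mathcal{T}^*$ and $M^\theta_V$ for $w\in A_p^{\rho,\theta}$ closes out~(b).

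The crux, and the main obstacle, is~(c). For $|x|>2R$ we have $b(x)=0$, whence
\[
\mathcal{T}^*_b f(x)=\sup_{t>0}\Big|\int_{B(0,R)}k_t(x,y)\,b(y)f(y)\,dy\Big|.
\]
The essential new ingredient is the enhanced Gaussian bound for the Schr\"odinger semigroup kernel (Dziuba\'nski--Zienkiewicz/Kurata): for every $N>0$,
\[
|k_t(x,y)|\le C_N\,t^{-n/2}\exp\!\Big({-}\tfrac{|x-y|^2}{ct}\Big)\Big(1+\tfrac{\sqrt{t}}{\rho(x)}\Big)^{-N}.
\]
Using $|x-y|\ge|x|/2$ for $y\in B(0,R)$ and optimizing in $t$, the product $t^{-n/2}e^{-|x|^2/(4ct)}(1+\sqrt t/\rho(x))^{-N}$ is dominated by $C_N|x|^{-n}(1+|x|/\rho(x))^{-N}$. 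Combined with the elementary bound $\int_{B(0,R)}|b f|\le C_{b,R,w}\|f\|_{L^p(w)}$, which follows from H\"older's inequality and local integrability of $w^{-1/(p-1)}$ from the $A_p^{\rho,\theta}$ condition, this yields the pointwise decay
\[
\mathcal{T}^*_b f(x)\le C_{b,R,w}\,|x|^{-n}\Big(1+\tfrac{|x|}{\rho(x)}\Big)^{-N}\|f\|_{L^p(w)}\qquad(|x|>2R).
\]
The extra factor $(1+|x|/\rho(x))^{-N}$, which has no analogue in the classical singular-integral setting, is precisely what will make the hypothesis~\eqref{1.8} unnecessary.

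To finish (c), I will invoke Shen's quasi-scaling $\rho(x)\le C\rho(0)(1+|x|/\rho(0))^{k_0/(k_0+1)}$ to see that $(1+|x|/\rho(x))^{-N}\lesssim (|x|/\rho(0))^{-N/(k_0+1)}$ for $|x|\gg\rho(0)$, and combine this with the polynomial volume growth $w(B(0,A))\le CA^{np}(1+A/\rho(0))^{\gamma}$ available for $w\in A_p^{\rho,\theta}$. A dyadic annular decomposition over the shells $\{2^kA\le|x|<2^{k+1}A\}$ then gives
\[
\int_{|x|>A}|\mathcal{T}^*_b f(x)|^p w(x)\,dx\le C_{b,R,w}\|f\|_{L^p(w)}^p\sum_{k\ge 0}\Big(\tfrac{2^kA}{\rho(0)}\Big)^{\gamma-Np/(k_0+1)},
\]
which tends to $0$ as $A\to\infty$ provided $N$ is chosen so that $Np/(k_0+1)>\gamma$. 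Together with (a) and (b), this gives precompactness of $\mathcal{F}$ in $L^p(w)$ and hence the compactness of $\mathcal{T}^*_b$. The subtle point throughout is the balance in step~(c) between the $\rho$-polynomial gain from the kernel and the a~priori only polynomial growth of the $A_p^{\rho,\theta}$ weight; it is this balance, rather than any new boundedness input, that replaces the hypothesis~\eqref{1.8} used in~\cite{WX}.
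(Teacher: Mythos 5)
Your treatment of the decisive point — tightness at infinity without hypothesis \eqref{1.8} — is exactly the paper's: for $b\in\mathcal{C}_c^\infty$ supported in $B(0,R)$ you optimize the Gaussian--$\rho$ kernel bound in $t$ to get $\sup_{t>0}|k_t(x,y)|\le C_N|x-y|^{-n}(1+|x-y|/\rho(x))^{-N}$ (the paper's \eqref{3.33}), convert $\rho(x)$ to $\rho(0)$ by Lemma \ref{lem2.1}, control $w(B(0,2^jA))$ through Lemma \ref{lem2.2}(iv) (your ``polynomial volume growth'' with $\gamma=\theta p$), and sum over dyadic annuli choosing $N>(k_0+1)\theta$. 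The reduction to $b\in\mathcal{C}_c^\infty$ via Theorem B and norm-closedness of compact operators is also the paper's. So on the crux your proposal and the paper coincide.

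There are, however, two genuine gaps in your equicontinuity step (b). First, you invoke the $L^p(w)$-boundedness of $M_V^\theta$ for $w\in A_p^{\rho,\theta}$; by Lemma \ref{lem2.3} this operator is only known to be bounded on $L^{p_1}(w)$ for $p_1>p$, and the paper explicitly warns it may fail to be bounded on $L^p(w)$ itself. The correct substitute is the variant maximal function $M_{V,\eta}$ with $\eta\ge p'$ and Lemma \ref{lem2.4}; the kernel bounds with large $N$ give enough decay in the $\Psi_\theta$ factors to dominate by $M_{V,\eta}$, so this is repairable but must be said. Second, the H\"older-in-$x$ kernel estimate (Lemma \ref{lem2.6}) is only valid for $|h|\le\sqrt t$, and your splitting never addresses the regime $\sqrt t<|h|$, over which the supremum defining $\mathcal{T}^*_b$ still ranges. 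The paper (following \cite{WX}) removes this regime by the smooth truncation in $t$ (the operator $\mathcal{T}^*_{b,\gamma}$ with $\gamma=|h|$), proves compactness of the truncated operators, and passes to the limit with Lemma \ref{lem2.7}; alternatively, working directly with $\mathcal{T}^*_b$ as you do, you must estimate the small-$t$ contribution separately, e.g. using $|b(x)-b(y)|\le\|\nabla b\|_\infty|x-y|$ to extract a factor $\sqrt t\le|h|$ and again dominate by $|h|\,M_{V,\eta}f(x)$. Without one of these devices the modulus $\omega(|h|)$ you claim is not established.
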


The organization of this paper is as follows. In section \ref{S2}, we prepare some definitions and preliminary lemmas, which play a fundamental role in our analysis. Section \ref{S3} is devoted to give the proof of Theorem \ref{thm1.1} via smoothness truncated techniques, which is different from what have been used in \cite{WX}. The domain of integration will be divided into several cases, some cases are combinable, but various subcases also arise, which increases the difficulty we need to deal with. 

Throughout the paper,  the letter $C$ or $c$, sometimes with certain parameters,
will stand for positive constants not necessarily the same one at each occurrence,
but are independent of the essential variables.  $A\sim B$ means that there exists
constants $C_1>0$ and $C_2>0$ such that $C_2B\leq A\leq C_1B$.

\bigskip

\section{Preliminaries }\label{S2}
\medskip
We first recall some notation and lemmas which will be used later. Given
a Lebesgue measurable set $E\subset\mathbb{R}^n$, $|E|$ will denote the Lebesgue measure of $E$. If $B = B(x, r)$ is a ball in $\mathbb{R}^n$ and $\lambda$ is a real number, then $\lambda B$ shall stand for the ball with the same center as $B$ and radiu $\lambda$ times that of $B$. A weight $w$ is a non-negative measurable function on $\mathbb{R}^n$. The measure associated with $w$ is the set function given by $w(E)=\int_E wdx$. For $0<p<\infty$ we denote by $L^p(w)$ the space of all Lebesgue measurable function $f(x)$ such that
$$\|f\|_{L^p(w)}=\Big(\int_{\mathbb{R}^n}|f(x)|^pw(x)dx|\Big)^{1/p}.$$

The auxiliary function $\rho$ defined in (\ref{1.3}) enjoys the following property.
\begin{lemma}\label{lem2.1}	{\rm (\rm\cite{S1})}.
	There exist $k_0\geq1$ and $C>0$ such that for all $x,\ y\in\rn$,
	\begin{equation*}
	C^{-1}\rho(x)\Big(1+\frac{|x-y|}{\rho(x)}\Big)^{-k_0}\leq\rho(y)\leq C\rho(x)(1+\frac{|x-y|}{\rho(x)}\Big)^{\frac{k_0}{k_0+1}}.
	\end{equation*}
	In particular, $\rho(x)\sim\rho(y)$ if $|x-y|<C\rho(x)$.
\end{lemma}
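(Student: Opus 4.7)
The plan is to derive everything from a single quantitative scaling estimate for the averaged functional $m(x,r):=r^{2-n}\int_{B(x,r)}V(y)\,dy$. First I would show that for $0<r_1\leq r_2$,
$$m(x,r_1)\leq C\Big(\frac{r_1}{r_2}\Big)^{2-n/q}m(x,r_2),$$
which follows by bounding $\int_{B(x,r_1)}V\leq |B(x,r_1)|^{1/q'}\bigl(\int_{B(x,r_2)}V^{q}\bigr)^{1/q}$ via H\"older and then applying the reverse H\"older hypothesis $V\in RH_q$ on $B(x,r_2)$. Since $q>n/2$ the exponent $\gamma:=2-n/q$ is positive, so $m(x,\cdot)$ is continuous, grows to infinity in a controlled way, and one has $m(x,\rho(x))=1$ at the critical radius.

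Next I would establish the ``lower'' estimate $\rho(y)\geq C^{-1}\rho(x)(1+|x-y|/\rho(x))^{-k_0}$. The inclusion $B(x,\rho(x))\subset B(y,R)$ with $R:=\rho(x)+|x-y|$ yields
$$m(y,R)\geq R^{2-n}\int_{B(x,\rho(x))}V=R^{2-n}\rho(x)^{n-2}=\Big(\frac{\rho(x)}{R}\Big)^{n-2}.$$
When $|x-y|\leq\tfrac{1}{2}\rho(x)$ this forces $m(y,R)\gtrsim 1$, and combined with the scaling inequality one obtains $\rho(y)\sim\rho(x)$, which also settles the ``in particular'' clause. The general case is handled by a chain argument: join $x$ to $y$ by a finite sequence $x=x_0,x_1,\dots,x_N=y$ with $|x_{i+1}-x_i|\leq\tfrac{1}{4}\rho(x_i)$; at each step $\rho$ changes by at most a fixed multiplicative factor by the local statement just proved, and counting the number of steps in terms of $|x-y|/\rho(x)$ and the doubling constant of $V\,dx$ (which follows from $V\in RH_q\subset A_\infty$) produces the exponent $k_0=k_0(n,q)\geq 1$.

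The ``upper'' estimate $\rho(y)\leq C\rho(x)(1+|x-y|/\rho(x))^{k_0/(k_0+1)}$ is then obtained by self-duality rather than a new integral estimate. Swapping $x$ and $y$ in the lower bound gives $\rho(x)\geq C^{-1}\rho(y)(1+|x-y|/\rho(y))^{-k_0}$. If $|x-y|\leq\rho(y)$ this immediately yields $\rho(y)\leq C'\rho(x)$, stronger than the claim. Otherwise $1+|x-y|/\rho(y)\leq 2|x-y|/\rho(y)$, so
$$\rho(y)^{k_0+1}\leq 2^{k_0}C\,\rho(x)\,|x-y|^{k_0},$$
and taking $(k_0+1)$-th roots produces precisely $\rho(y)\leq C''\rho(x)(|x-y|/\rho(x))^{k_0/(k_0+1)}$. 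Thus the asymmetric exponent $k_0/(k_0+1)$ in the upper bound is not a sharper integral estimate but a consequence of this algebraic inversion.

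The main obstacle is the second step: the one-ball inclusion only controls $\rho(y)$ when $|x-y|\lesssim \rho(x)$, and extending this to arbitrary separations requires propagating the local comparability through a chain and quantifying how much $\rho$ can contract per step. This is where the doubling of $V\,dx$ enters decisively, and it is the doubling exponent of $V$ (together with $\gamma=2-n/q$) that dictates the value of $k_0$ appearing throughout the statement.
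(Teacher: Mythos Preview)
The paper does not supply a proof of this lemma; it simply quotes it from Shen \cite{S1} (Lemma~1.4 there). Your reverse-H\"older scaling $m(x,r_1)\le C(r_1/r_2)^{2-n/q}m(x,r_2)$, the local comparison $\rho(y)\sim\rho(x)$ when $|x-y|\le\tfrac12\rho(x)$, and the algebraic inversion that produces the asymmetric exponent $k_0/(k_0+1)$ in the upper bound are all correct and match Shen's argument exactly.

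The gap is the chain step for the lower bound at large separation. Along a chain $x=x_0,\dots,x_N=y$ with $|x_{i+1}-x_i|\le\tfrac14\rho(x_i)$ the local comparison only gives $\rho(x_{i+1})\ge c_0\,\rho(x_i)$, hence $\rho(y)\ge c_0^{\,N}\rho(x)$, and nothing in your outline bounds $N$ polynomially in $|x-y|/\rho(x)$: the step lengths $\tfrac14\rho(x_i)$ may themselves shrink by the factor $c_0$ at every step, so the partial sums $\sum_{i<N}\tfrac14\rho(x_i)$ stay below $\tfrac{\rho(x)}{4(1-c_0)}$ and the chain need not reach $y$ with any controlled $N$ once $|x-y|\gg\rho(x)$. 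Iterating the local bound therefore yields at best exponential, not polynomial, decay. Shen's actual argument for the case $|x-y|>\rho(x)$ is not a spatial chain but a single global estimate: the doubling of $V$ (which you correctly identified as the key input) gives directly $m(x,R)\le C\bigl(R/\rho(x)\bigr)^{k_0}$ for all $R\ge\rho(x)$; since $B(y,2|x-y|)\subset B(x,3|x-y|)$, doubling again transfers this to $m(y,2|x-y|)\le C'\bigl(|x-y|/\rho(x)\bigr)^{k_0}$, and then your reverse-H\"older scaling applied at $y$ produces a radius $r$ with $m(y,r)\le 1$, giving the polynomial lower bound on $\rho(y)$. So doubling does enter decisively, but as a one-shot radius estimate rather than by iterating the local comparison along a path.
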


It is known that
$A_p^{\rho,\theta}$ weights class has some properties analogy to $A_p$ weights class.
\begin{lemma}\label{lem2.2}	{\rm (\rm \cite{BHS1,T1})}.
	Let $1<p<\infty$ and $w\in A_p^{\rho,\infty}=\bigcup_{\theta\geq0}A_p^{\rho,\theta}$. Then
	\begin{enumerate}[{\rm (i)}]
		\item If $1\leq p_1<p_2<\infty$, then $A_{p_1}^{\rho,\theta}\subset A_{p_2}^{\rho,\theta}.$
		\item $w\in A_p^{\rho,\theta}$ if and only if $w^{-\frac{1}{p-1}}\in A_{p'}^{\rho,\theta}$, where $1/p+1/p'=1$.
		\item If  $w\in A_p^{\rho,\infty}$, $1<p<\infty$,  then there exists $\epsilon>0$ such that $w\in A_{p-\epsilon}^{\rho,\infty}$.
		\item if $w\in A_p^{\rho,\theta}$, $1\leq p<\infty$, then for any cube $Q$, we have
		$$\frac{1}{\Psi_\theta(Q)|Q|}\int_Q|f(y)|dy\leq C\Big(\frac{1}{w(5Q)}\int_Q|f|^pw(y)dy\Big)^{1/p},$$
		where $\Psi_\theta$ is the same function appeared in Definition \ref{def1.2}. 
	\end{enumerate}
\end{lemma}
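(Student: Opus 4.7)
The plan is to handle the four parts in order, since (i), (ii), (iv) are standard manipulations and (iii) is the real obstacle.

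For part (i), I would mimic the classical $A_p \subset A_q$ argument via Jensen's inequality. Set $q_j = 1/(p_j-1)$ so that $q_2 < q_1$, and apply Jensen with the convex map $t \mapsto t^{q_1/q_2}$ to obtain
\[
\Bigl(\frac{1}{|B|}\int_B w^{-1/(p_2-1)}\Bigr)^{p_2-1} \leq \Bigl(\frac{1}{|B|}\int_B w^{-1/(p_1-1)}\Bigr)^{p_1-1},
\]
using the identity $(p_1-1)q_1 = (p_2-1)q_2 = 1$. Multiplying by the average of $w$ gives the $A_{p_2}^{\rho,\theta}$ bound with the same constant, up to the harmless factor $\Psi_\theta(B)^{p_1}\leq \Psi_\theta(B)^{p_2}$ (recall $\Psi_\theta(B)\geq 1$). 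Part (ii) is a direct bookkeeping: write the candidate $A_{p'}^{\rho,\theta}$ condition for $w^{-1/(p-1)}$, use $(p-1)(p'-1)=1$ to see $(w^{-1/(p-1)})^{-1/(p'-1)}=w$, raise to the $(p-1)$-th power, and check that $\Psi_\theta(B)^{p'(p-1)}=\Psi_\theta(B)^{p}$, recovering the $A_p^{\rho,\theta}$ defining inequality.

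Part (iv) is the weighted Hölder inequality tailored to the balls $5Q$. I would write $|f|=|f|w^{1/p}\cdot w^{-1/p}$ and apply Hölder on $Q$:
\[
\frac{1}{|Q|}\int_Q |f| \leq \Bigl(\int_Q |f|^p w\Bigr)^{1/p}\cdot \frac{1}{|Q|}\Bigl(\int_Q w^{-1/(p-1)}\Bigr)^{1/p'}.
\]
Enlarging $Q$ to $5Q$ in the last factor and invoking the $A_p^{\rho,\theta}$ condition on $5Q$,
\[
\Bigl(\int_{5Q} w^{-1/(p-1)}\Bigr)^{p-1} \leq C\,\frac{|5Q|^{p}\,\Psi_\theta(5Q)^{p}}{w(5Q)},
\]
together with $\Psi_\theta(5Q)\lesssim \Psi_\theta(Q)$ (immediate from the definition of $\Psi_\theta$), one extracts a factor $\Psi_\theta(Q)/w(5Q)^{1/p}$ after taking $1/p'$-th powers and using $1/((p-1)p')=1/p$. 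The remaining geometry $|5Q|/|Q|\lesssim 1$ collapses everything into the claimed bound.

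The main obstacle is part (iii), a left-openness / self-improvement statement. The classical proof (Coifman–Fefferman) requires a reverse Hölder inequality: $w\in A_p$ implies $w\in RH_{1+\delta}$. In the Schrödinger-adapted setting one must replace the classical Calderón–Zygmund decomposition by one organized at the critical scale $\rho(x)$, using Lemma \ref{lem2.1} to transfer estimates between nearby points. Concretely, I would prove (or quote from \cite{BHS1,T1}) that $w\in A_p^{\rho,\theta}$ admits a reverse Hölder estimate on balls, with a $\Psi_{\theta'}$ loss for some $\theta'\geq\theta$, and then apply it to $w^{-1/(p-1)}\in A_{p'}^{\rho,\theta}$ (via part (ii)) to promote it to $A_{p'-\eta}^{\rho,\theta'}$ for some small $\eta>0$; dualizing once more with (ii) gives $w\in A_{p-\epsilon}^{\rho,\theta''}$ for an appropriate $\theta''$, hence $w\in A_{p-\epsilon}^{\rho,\infty}$. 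The delicate point compared to the classical case is that the reverse Hölder constant and exponent may depend on $\theta$ and must be tracked carefully, since the $\Psi_\theta$ factors preclude the usual doubling arguments; this is where one leans on the structural results of \cite{BHS1,T1}.
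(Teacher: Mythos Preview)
The paper does not actually prove Lemma~\ref{lem2.2}; it simply records the statement and attributes the results to \cite{BHS1,T1}, noting immediately afterward that part~(iii) is due to Bongioanni et al.\ and part~(iv) to Tang. So there is no argument in the paper to compare your sketches against.

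That said, your outlines are sound. The Jensen argument for (i), the duality bookkeeping for (ii), and the H\"older-plus-$A_p^{\rho,\theta}$ computation for (iv) are exactly the standard proofs, and your tracking of the $\Psi_\theta$ factors is correct (in particular $\Psi_\theta(5Q)\leq 5^\theta\Psi_\theta(Q)$, which is what you need). One small omission: in (iv) the case $p=1$ requires a separate two-line argument using the $A_1^{\rho,\theta}$ condition directly, since there is no dual exponent to run H\"older against. For (iii) you correctly identify the mechanism (a reverse H\"older inequality adapted to the critical-radius structure, then dualize via (ii)), and you are right that this is precisely what is carried out in \cite{BHS1}; your sketch is faithful to that source, which is all the paper itself invokes.
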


It should be pointed out that {\rm(iii)} of Lemma \ref{lem2.2} was proved by Bongioanni et al. \cite{BHS1} and  {\rm(iv)} of Lemma \ref{lem2.2} was proved by Tang \cite{T1}.

\medskip

For maximal operator $M_{V}^\theta$ in Definition \ref{def1.2}, the following result holds:

\begin{lemma}\label{lem2.3}	{\rm (\rm\cite{T1})}.
	Let $1<p<\infty$ and suppose that $w\in A_p^{\rho,\theta}$. If $p<p_1<\infty$,
	then
	\begin{equation*}
	\int_{\mathbb{R}^n}|M_V^\theta f(x)|^{p_1}w(x)dx\leq C_p\int_{\mathbb{R}^n}|f(x)|^{p_1}w(x)dx.
	\end{equation*}
\end{lemma}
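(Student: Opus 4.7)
The plan is to reduce the boundedness of $M_V^\theta$ on $L^{p_1}(w)$ to the boundedness of a (non-centered) Hardy--Littlewood maximal function taken with respect to the measure $w\,dx$, via a pointwise comparison furnished by part (iv) of Lemma \ref{lem2.2}. The hypothesis $p_1>p$ will be used exactly to have room $p_1/p>1$ in which to run the weighted maximal inequality.

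First, I would apply (iv) of Lemma \ref{lem2.2} to $w\in A_p^{\rho,\theta}$ pointwise at every ball $B\ni x$:
\begin{equation*}
\frac{1}{\Psi_\theta(B)|B|}\int_B|f(y)|\,dy\leq C\Big(\frac{1}{w(5B)}\int_B|f(y)|^pw(y)\,dy\Big)^{1/p}\leq C\Big(\frac{1}{w(5B)}\int_{5B}|f(y)|^pw(y)\,dy\Big)^{1/p}.
\end{equation*}
Taking the supremum over balls containing $x$ and defining the non-centered weighted maximal operator
\begin{equation*}
\widetilde M_w(h)(x):=\sup_{B\ni x}\frac{1}{w(5B)}\int_{5B}|h(y)|w(y)\,dy,
\end{equation*}
this yields the key pointwise bound $M_V^\theta f(x)\leq C\bigl(\widetilde M_w(|f|^p)(x)\bigr)^{1/p}$, so that $(M_V^\theta f)^{p_1}\leq C\bigl(\widetilde M_w(|f|^p)\bigr)^{p_1/p}$.

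Next, I would establish that $\widetilde M_w$ is bounded on $L^{r}(w)$ for every $1<r\leq\infty$. This is a standard fact, valid for any locally finite positive Borel measure and requiring no doubling hypothesis on $w$: a Vitali covering argument on $\{\widetilde M_w h>\lambda\}$ produces pairwise disjoint selected balls $\{B_j\}$ whose $5$-dilates cover the level set, and then
\begin{equation*}
w\bigl(\{\widetilde M_w h>\lambda\}\bigr)\leq \sum_j w(5B_j)\leq \frac{1}{\lambda}\sum_j\int_{5B_j}|h|w\,dy\leq \frac{1}{\lambda}\int_{\mathbb{R}^n}|h|w\,dy,
\end{equation*}
which gives the weak-type $(1,1)$ bound with respect to $w$; interpolation with the trivial $L^\infty$ estimate then yields the strong-type $(r,r)$ estimate for all $r>1$.

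Finally, since $p_1/p>1$, applying this weighted maximal inequality with $r=p_1/p$ and $h=|f|^p$ gives
\begin{equation*}
\int_{\mathbb{R}^n}(M_V^\theta f)^{p_1}w\,dx\leq C\int_{\mathbb{R}^n}\bigl(\widetilde M_w(|f|^p)\bigr)^{p_1/p}w\,dx\leq C\int_{\mathbb{R}^n}\bigl(|f|^p\bigr)^{p_1/p}w\,dx=C\int_{\mathbb{R}^n}|f|^{p_1}w\,dx,
\end{equation*}
which is the desired conclusion. The only mild obstacle is the covering step for $\widetilde M_w$ in the absence of any doubling information on $w$; the $w(5B)$-normalization in part (iv) of Lemma \ref{lem2.2} is precisely what makes that step go through, so I would emphasize this interplay rather than any delicate estimate.
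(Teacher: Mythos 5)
Your overall strategy (a pointwise domination coming from Lemma \ref{lem2.2}(iv), followed by a weighted maximal inequality applied with exponent $p_1/p>1$) is the right kind of reduction --- note the paper itself only cites this lemma from \cite{T1} without proof --- but the key step of your argument fails as written. The problem is the weak-type $(1,1)$ bound you claim for $\widetilde M_w(h)(x)=\sup_{B\ni x}w(5B)^{-1}\int_{5B}|h|w\,dy$. In your Vitali argument the selected balls $B_j$ are pairwise disjoint, but your estimate integrates $|h|w$ over the dilates $5B_j$, and dilates of disjoint balls need not be disjoint nor have bounded overlap: for instance the balls $B_j=B\bigl((2\cdot4^{-j},0,\dots,0),\,4^{-j}\bigr)$ are pairwise disjoint while every $5B_j$ contains the origin, so if $|h|w$ concentrates near $0$ then $\sum_j\int_{5B_j}|h|w\,dy$ is not controlled by $\int_{\mathbb{R}^n}|h|w\,dy$. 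Hence the chain $\sum_j w(5B_j)\le\lambda^{-1}\sum_j\int_{5B_j}|h|w\,dy\le\lambda^{-1}\int_{\mathbb{R}^n}|h|w\,dy$ is unjustified. Nor can you appeal to a ``standard fact valid without doubling'': for a general (non-doubling) measure the non-centered maximal operator is not weak $(1,1)$ in dimension $n\ge2$ (Sj\"ogren's example); only the centered operator is, via Besicovitch, and your $\widetilde M_w$ is not centered.

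The gap is created exactly where you enlarged the domain of integration from $B$ to $5B$ in your first display; Lemma \ref{lem2.2}(iv) keeps the integral over $Q$ but the normalization $w(5Q)$ precisely so that this enlargement is unnecessary. Two clean repairs: (a) keep the operator $\sup_{B\ni x}w(5B)^{-1}\int_B|h|w\,dy$; then the Vitali argument does work, because the integrals on the right-hand side run over the pairwise disjoint balls $B_j$ while only the measures $w(5B_j)$ involve dilates, and Marcinkiewicz interpolation with the trivial $L^\infty$ bound gives $L^r(w)$-boundedness for $r=p_1/p>1$, after which your final computation is fine; or (b) argue directly on $M_V^\theta$: for $x$ in the level set $\{M_V^\theta f>\lambda\}$ choose $B\ni x$ with $(\Psi_\theta(B)|B|)^{-1}\int_B|f|\,dy>\lambda$, deduce $w(5B)\le C\lambda^{-p}\int_B|f|^pw\,dy$ from (iv), run Vitali to obtain the weak $(p,p)$ inequality for $M_V^\theta$ with respect to $w$, and then interpolate with the trivial $L^\infty$ bound. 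This second route is the standard one and makes transparent why the strict inequality $p_1>p$ is needed.
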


By Lemma \ref{lem2.3},  $M_V^\theta$ may not be bounded on $L^p(w)$ for all $w\in A_p^{\rho,\theta}$ and $1<p<\infty$. A variant maximal operator $M_{V,\eta}$ was introduced by Tang (\cite{T1}) as follows:
\begin{equation*}
M_{V,\eta} f(x)=\sup_{x\in B}\frac{1}{(\Psi_\theta(B))^\eta|B|}\int_B|f(y)|dy,\ \ \ 0<\eta<\infty.
\end{equation*}

This variant maximal function $M_{V,\eta} $ then enjoys the following $L^p$ boundedness.
\begin{lemma}\label{lem2.4}	{\rm (\rm\cite{T1})}.
	Let $1<p<\infty$, $p'=p/(p-1)$ and suppose that $w\in A_p^{\rho,\theta}$. Then there  exists a constant $C>0$ such that
	\begin{equation*}
	\|M_{V,p'}f\|_{L^p(w)}\leq C\|f\|_{L^p(w)}.
	\end{equation*}
\end{lemma}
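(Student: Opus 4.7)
The plan is to derive a pointwise estimate for $M_{V,p'}f$ in terms of a damped weighted maximal operator, and then establish boundedness of that auxiliary operator on the measure $w\,dx$. First I would apply H\"older's inequality with conjugate exponents $p$ and $p'$:
\begin{equation*}
\frac{1}{|B|}\int_B |f|\,dy\le \Big(\frac{1}{|B|}\int_B |f|^p w\,dy\Big)^{1/p}\Big(\frac{1}{|B|}\int_B w^{-1/(p-1)}\,dy\Big)^{1/p'}.
\end{equation*}
Rearranging the defining inequality~\eqref{1.4} of $A_p^{\rho,\theta}$ and raising to the $1/p'$ power yields
\begin{equation*}
\Big(\frac{1}{|B|}\int_B w^{-1/(p-1)}\,dy\Big)^{1/p'}\le C\,\Psi_\theta(B)\Big(\frac{|B|}{w(B)}\Big)^{1/p}.
\end{equation*}
Combining these two displays, dividing by $\Psi_\theta(B)^{p'}$, raising to the $p$-th power (so that the power of $\Psi_\theta$ becomes $p(1-p')=-p'$), and taking supremum over $B\ni x$ gives the pointwise estimate
\begin{equation*}
M_{V,p'}f(x)^p\le C\sup_{B\ni x}\frac{1}{\Psi_\theta(B)^{p'}\,w(B)}\int_B |f|^p w\,dy \;=:\; C\,\mathcal{N}_w(|f|^p)(x).
\end{equation*}
Hence it suffices to show that the damped weighted maximal operator $\mathcal{N}_w$ is bounded on $L^1(w\,dx)$.

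Second, I would prove boundedness of $\mathcal{N}_w$. The crucial feature is that $A_p^{\rho,\theta}$ weights need not be globally doubling on balls with $r>\rho(x)$, and the damping factor $\Psi_\theta(B)^{-p'}=(1+r/\rho(x))^{-\theta p'}$ is precisely calibrated to absorb this defect. The strategy, adapted from Bongioanni--Harboure--Salinas \cite{BHS1} and Tang \cite{T1}, is to split the supremum according to the scale $r/\rho(x)$. On small scales $(r\le\rho(x))$ the weight $w$ is locally doubling by Lemma~\ref{lem2.1}, so classical Muckenhoupt--Vitali covering techniques furnish the desired bound. On large scales one dyadically partitions the admissible balls into annuli $r/\rho(x)\in[2^j,2^{j+1}]$, $j\ge 0$, and the polynomial decay of the damping factor produces a summable geometric series in $j$. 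Combining the two regimes yields the $L^1(w\,dx)\to L^1(w\,dx)$ bound for $\mathcal{N}_w$, which after undoing the pointwise reduction completes the proof.

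The main obstacle is precisely the failure of global doubling of $w$ on balls with $r\gg\rho(x)$, which prevents a direct application of classical Muckenhoupt theory. Careful bookkeeping of the powers of $\Psi_\theta(B)$ is essential so that the exponent $-p'$ produced by the H\"older step matches the polynomial decay needed to sum the large-scale contribution; it is this precise matching that delivers the endpoint exponent $p$, in contrast to the weaker statement in Lemma~\ref{lem2.3} for the undamped operator $M_V^\theta$, which is only bounded on $L^{p_1}(w)$ for $p_1>p$.
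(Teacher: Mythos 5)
There is a genuine gap at the decisive step. Your pointwise reduction is fine: H\"older plus the $A_p^{\rho,\theta}$ condition does give
\begin{equation*}
M_{V,p'}f(x)^p\le C\sup_{B\ni x}\frac{1}{\Psi_\theta(B)^{p'}w(B)}\int_B |f|^p w\,dy=C\,\mathcal{N}_w(|f|^p)(x),
\end{equation*}
and the exponent bookkeeping ($p(1-p')=-p'$) is correct. But the program then asks for the \emph{strong} $L^1(w\,dx)$ boundedness of the damped weighted maximal operator $\mathcal{N}_w$, and this is false. The damping factor $\Psi_\theta(B)^{-p'}=(1+r/\rho(x))^{-\theta p'}$ only acts at scales $r\gtrsim\rho(x)$; on balls with $r\le\rho(x)$ it is comparable to $1$, so $\mathcal{N}_w$ dominates a local weighted Hardy--Littlewood maximal operator, which is never bounded on $L^1$ (take $w\equiv 1\in A_p\subset A_p^{\rho,\theta}$ and $g=\mathbf{1}_{B(0,\delta)}$ with $\delta\ll\rho(0)$: then $\mathcal{N}_w g(x)\gtrsim \delta^n|x|^{-n}$ for $\delta<|x|<c\rho(0)$, and $\|\mathcal{N}_w g\|_{L^1}/\|g\|_{L^1}\gtrsim\log(c\rho(0)/\delta)\to\infty$). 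The covering arguments you invoke for the small-scale regime yield only the weak type $(1,1)$ of $\mathcal{N}_w$ with respect to $w\,dx$, hence only a weak type $(p,p)$ estimate for $M_{V,p'}$ --- exactly the classical situation for the Muckenhoupt maximal theorem, where the same H\"older trick alone never gives the strong bound. Your closing remark that the damping ``delivers the endpoint exponent $p$'' is therefore a misdiagnosis: the damping compensates the growth factor $\Psi_\theta(B)^p$ coming from the weight condition, not the endpoint failure intrinsic to maximal operators.

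To close the gap one must use the self-improvement of the weight class, Lemma \ref{lem2.2}(iii) ($w\in A_p^{\rho,\theta}\Rightarrow w\in A_{p-\epsilon}^{\rho,\theta'}$ for some $\epsilon>0$, $\theta'\ge\theta$), and then either interpolate between a weak $(p-\epsilon,p-\epsilon)$ bound and a trivial large-exponent bound, or rerun your pointwise argument at the exponent $p-\epsilon$; in the latter case one must check that the resulting factor $\Psi_{\theta'}(B)^{p-\epsilon}\Psi_\theta(B)^{-p'(p-\epsilon)}$ is still admissible for a weak-type covering argument, which is precisely the delicate bookkeeping carried out by Tang. Note also that the local doubling of $w$ at sub-$\rho(x)$ scales comes from the $A_p^{\rho,\theta}$ condition itself (since $\Psi_\theta(B)\le 2^\theta$ there), not from Lemma \ref{lem2.1}, which concerns only $\rho$. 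For what it is worth, the paper does not prove this lemma at all: it is quoted from \cite{T1}, so your sketch should be measured against Tang's argument, which indeed proceeds through the openness property rather than an (unavailable) strong $(1,1)$ bound.
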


\medskip
Let $q_t$ be the kernel of $\frac{d}{dt}e^{-tL}$. Now, we list some pointwise estimates of $q_t$.
\begin{lemma}\label{lem2.5}	{\rm (\rm \cite{DGTZ})}
	For every $N$, there are constants $c$ and $C_N$ such that
	\begin{equation*}
	|q_t(x,y)|\leq \frac{ C_N}{t^{n/2+1}}e^{-\frac{c|x-y|^2}{t}}\Big(1+\frac{\sqrt{t}}{\rho(x)}+\frac{\sqrt{t}}{\rho(y)}\Big)^{-N}.
	\end{equation*}
\end{lemma}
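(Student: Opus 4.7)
The plan is to derive the bound on the kernel $q_t(x,y) = \partial_t k_t(x,y)$ of $(d/dt)e^{-tL}$ from the analogous pointwise Gaussian estimate for the heat kernel $k_t(x,y)$ itself, by exploiting analyticity of the semigroup $(e^{-zL})$ in a sector and using the Cauchy integral formula to convert the time derivative into a contour integral whose length and radius are both comparable to $t$.

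First I would invoke the complex-time Gaussian bound for $k_z(x,y)$ on a sector $\Sigma_\delta = \{z \in \mathbb{C}\setminus\{0\} : |\arg z| < \delta\}$,
$$|k_z(x,y)| \leq \frac{C_N}{|z|^{n/2}}\exp\!\Big(-\frac{c|x-y|^2}{|z|}\Big) \Big(1+\frac{\sqrt{|z|}}{\rho(x)} + \frac{\sqrt{|z|}}{\rho(y)}\Big)^{-N}.$$
On the real axis this is the classical Dziubanski--Zienkiewicz estimate, proved by combining the Fefferman--Phong inequality for $V \in RH_q$ with $q>n/2$, a Moser/Davies argument for the Gaussian factor, and an Agmon-type exponential weight that produces any prescribed polynomial decay in $\sqrt{t}/\rho$. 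Because $L=-\Delta+V$ is self-adjoint and non-negative, the semigroup extends analytically to the right half-plane, and the real-axis bound extends to a subsector by Davies' conjugation-by-exponential-weights technique combined with a Phragm\'en--Lindel\"of argument in $z$; this is exactly the type of statement compiled in \cite{DGTZ}.

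Next I would apply the Cauchy integral formula. Fix $t>0$ and let $\gamma$ be the circle $|z-t|=t/2$, which lies inside $\Sigma_\delta$. Since $z \mapsto k_z(x,y)$ is holomorphic on a neighbourhood of the closed disc,
$$q_t(x,y) = \frac{d}{dt} k_t(x,y) = \frac{1}{2\pi i}\oint_{\gamma}\frac{k_z(x,y)}{(z-t)^2}\, dz.$$
On $\gamma$ one has $|z|\leq 3t/2$, $\operatorname{Re}(z)\geq t/2$, $|z-t|=t/2$, and the arc length is $\pi t$. Substituting the complex-time bound and noting that $|x-y|^2/|z|\geq \tfrac{2}{3}|x-y|^2/t$ and $\sqrt{|z|}/\rho(\cdot)\geq \sqrt{t/2}/\rho(\cdot)$ uniformly on $\gamma$, bounding the integrand by its maximum times the arc length yields
$$|q_t(x,y)| \leq \pi t \cdot \frac{4}{t^2} \cdot \frac{C_N}{t^{n/2}}\, e^{-c'|x-y|^2/t}\Big(1+\frac{\sqrt{t}}{\rho(x)} + \frac{\sqrt{t}}{\rho(y)}\Big)^{-N},$$
which is the claim after renaming the constants.

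The main obstacle is the first step: transferring the real-axis Gaussian-plus-$\rho$-decay bound to complex times while preserving all three decay factors simultaneously. Davies' weighted-$L^2$ argument delivers the Gaussian part directly, but the $(1+\sqrt{|z|}/\rho)^{-N}$ factor is delicate, because it requires that the $\rho$-localization survive the perturbation by the exponential weight $e^{\lambda\phi}$; this in turn relies on the reverse-H\"older hypothesis $V\in RH_q$ and on the slow-variation property of $\rho$ recorded in Lemma \ref{lem2.1}. Since Lemma \ref{lem2.5} is quoted from \cite{DGTZ}, I would ultimately cite their analysis for this step and present only the Cauchy-formula reduction as the new content, the point being that once the complex-time kernel bound is in hand the extra factor $t^{-1}$ coming from the time derivative is essentially free.
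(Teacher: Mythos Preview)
The paper does not give its own proof of Lemma~\ref{lem2.5}; the lemma is simply quoted from \cite{DGTZ} and used as a black box. So there is no argument in the paper against which to compare your proposal.

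That said, your sketch is the standard route to such time-derivative bounds and is sound: once one has the complex-time estimate for $k_z(x,y)$ on a sector $\Sigma_\delta$ with all three decay factors (Gaussian, polynomial in $|z|$, and the $(1+\sqrt{|z|}/\rho)^{-N}$ localisation), the Cauchy formula over the circle $|z-t|=t/2$ immediately produces the extra factor $t^{-1}$ without loss, exactly as you compute. Your identification of the main obstacle is also correct: the Gaussian and polynomial parts transfer to the sector by routine Davies--Phragm\'en--Lindel\"of arguments, whereas preserving the $\rho$-decay under the exponential conjugation is the genuinely delicate point, and it relies on $V\in RH_q$ and the slow variation of $\rho$ from Lemma~\ref{lem2.1}. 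Deferring that step to \cite{DGTZ} is appropriate here, since the present paper does the same.
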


\begin{lemma}\label{lem2.6}	{\rm (\rm \cite{DGTZ})}
	There exist $0 < \delta <1$ and $c>0$ such that for every $N>0$ there is a constant
	$C_N>0$ so that, for all $|h|\leq\sqrt{t}$,
	\begin{equation*}
	|q_t(x+h)-q_t(x,y)|\leq\frac{ C_N}{t^{n/2+1}}\Big(\frac{|h|}{\sqrt{t}}\Big)^\delta e^{-\frac{c|x-y|^2}{t}}\Big(1+\frac{\sqrt{t}}{\rho(x)}+\frac{\sqrt{t}}{\rho(y)}\Big)^{-N}.
	\end{equation*}
\end{lemma}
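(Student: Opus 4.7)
The plan is to derive the H\"older estimate by combining the pointwise bound of Lemma~\ref{lem2.5} with an interior parabolic regularity estimate of De~Giorgi--Nash--Moser type for the Schr\"odinger operator $L$. The key observation is that, since $\partial_t k_t = -L_x k_t$, differentiating once more in $t$ gives $\partial_t q_t = -L_x q_t$; hence, for each fixed $y$, the function $u(s,\xi) := q_s(\xi,y)$ is a local weak solution of the parabolic equation $\partial_s u + L_\xi u = 0$. For $V\in RH_q$ with $q>n/2$ the corresponding De~Giorgi--Nash--Moser theorem yields an a priori H\"older exponent $\delta\in(0,1)$, depending only on $n$ and the reverse H\"older constants of $V$, together with a Campanato-type bound of the form
$$
|u(t,x+h) - u(t,x)| \leq C\left(\frac{|h|}{\sqrt{t}}\right)^{\delta} \|u\|_{L^\infty(Q)},\qquad |h| \leq \tfrac{1}{2}\sqrt{t},
$$
where $Q := (t/2,2t)\times B(x,\sqrt{t})$ is a parabolic cylinder of scale $\sqrt{t}$ centered at $(t,x)$.

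\textbf{Execution.} Fix $t,x,y$ and $h$ with $|h|\leq\sqrt{t}$ (the borderline case $|h|\sim\sqrt{t}$ reduces trivially to Lemma~\ref{lem2.5}). I would apply the above Campanato estimate on $Q$. Inside this cylinder, $\rho(\xi)\sim\rho(x)$ by Lemma~\ref{lem2.1} when $\sqrt{t}\lesssim\rho(x)$ (and is controlled up to a polynomial loss in the opposite regime), while the Gaussian factor $e^{-c|\xi-y|^2/s}$ is comparable to $e^{-c'|x-y|^2/t}$ for a slightly smaller $c'>0$. Inserting Lemma~\ref{lem2.5} with exponent $N'$ strictly larger than $N$ into the Campanato inequality and absorbing the resulting constants then produces
$$
|q_t(x+h,y) - q_t(x,y)| \leq \frac{C_N}{t^{n/2+1}}\left(\frac{|h|}{\sqrt{t}}\right)^{\delta} e^{-c|x-y|^2/t}\left(1+\frac{\sqrt{t}}{\rho(x)}+\frac{\sqrt{t}}{\rho(y)}\right)^{-N},
$$
which is the desired conclusion.

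\textbf{Main obstacle.} The delicate point is preserving the $\rho$-decay factor after passing through the Moser iteration. In the regime $\sqrt{t}\lesssim\rho(x)$, $\rho$ is essentially constant across $Q$ by Lemma~\ref{lem2.1}, so the Campanato reduction is harmless. In the complementary regime $\sqrt{t}\gtrsim\rho(x)$, however, one must exploit the coercivity of the zero-order term $V$ to gain the extra polynomial decay $(1+\sqrt{t}/\rho)^{-N}$: the Caccioppoli and Moser energy estimates must be carried out so that $V$ appears on the right-hand side and the iteration produces the $\rho$-decay directly, and $N'$ in Lemma~\ref{lem2.5} must be chosen large enough to absorb the loss incurred when comparing $\rho(\xi)$ across the cylinder. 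I expect this two-regime analysis, together with the scale invariance of $RH_q$ under dilations that guarantees a uniform De~Giorgi exponent $\delta$, to be the crux of the argument; once it is in place, the full H\"older bound stated in the lemma follows.
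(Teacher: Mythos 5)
You should first note that the paper does not prove Lemma \ref{lem2.6} at all: it is quoted from \cite{DGTZ}, where the estimate is obtained by a perturbation (Duhamel) argument comparing $e^{-tL}$ with the free heat semigroup, the bounds being proved first for the kernel $k_t$ and then transferred to $q_t=\partial_t k_t$ by analyticity of the semigroup in $t$ (a Cauchy-integral argument), not by applying regularity theory directly to $q_t$. So your route through De Giorgi--Nash--Moser for the parabolic equation $\partial_s u+L u=0$ is genuinely different from the cited proof, and in principle such an approach can be made to work; the problem is that, as written, it is incomplete exactly at its central step.

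The gap is the claimed Campanato bound $|u(t,x+h)-u(t,x)|\leq C(|h|/\sqrt t)^{\delta}\|u\|_{L^{\infty}(Q)}$ with $C$ and $\delta$ depending only on $n$ and the $RH_q$ constants, uniformly over all cylinders $Q=(t/2,2t)\times B(x,\sqrt t)$. This is not available once $\sqrt t\gtrsim\rho(x)$: the interior oscillation estimate for $\partial_s u-\Delta u+Vu=0$ degrades with the scaled size $t\cdot|B(x,\sqrt t)|^{-1}\int_{B(x,\sqrt t)}V$, which by the definition \eqref{1.3} of $\rho$ and the doubling of $V\,dx$ coming from $RH_q$ grows like a power of $\sqrt t/\rho(x)$; the constant-potential example $V\equiv\lambda^{2}$, $u=e^{-\lambda^{2}s}v$ with $v$ caloric, already shows the black-box constant cannot be independent of this quantity. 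You acknowledge this regime as the ``main obstacle,'' but the assertion that the Moser iteration ``produces the $\rho$-decay directly'' is not substantiated and is not where the decay comes from --- the decay factor is carried by the size bound of Lemma \ref{lem2.5}, not by the iteration. Two standard repairs would close the gap: (a) treat $-Vu$ as a right-hand side and use interior regularity for the free heat operator, which loses only a polynomial factor $(1+\sqrt t/\rho(x))^{\mu}$ (controlled via $RH_q$ and Lemma \ref{lem2.1}) that is absorbed by taking $N'$ large in Lemma \ref{lem2.5}; or (b) prove the H\"older estimate with no $\rho$-decay at all and then take a geometric mean with the size bound of Lemma \ref{lem2.5}, which yields the stated inequality with a smaller $\delta$ and a smaller $c$ since $N$ there is arbitrary. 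Without one of these absorption mechanisms spelled out, your argument is a plausible outline rather than a proof of the lemma.
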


Let $X$ and $Y$ be Banach spaces and suppose $T$ and $\{T_n\}$ be operators from $X$ to $Y$. Then, we need the limiting property of compact operators. 
\begin{lemma}\label{lem2.7}{\rm (\rm \cite[p.\,278, Theorem(iii)]{Y})}
	Let a sequence $\{T_n\}$ of compact operators converge to an operator T in sense of the uniform operator toplogy, i.e., $\lim\limits_{n\rightarrow\infty}\|T-T_n\|=0$. Then $T$ is also compact.
\end{lemma}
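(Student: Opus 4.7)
The plan is to verify directly that $T$ sends bounded subsets of $X$ to totally bounded subsets of $Y$. Since $Y$ is a Banach space, total boundedness is equivalent to relative compactness, so it suffices to exhibit a finite $\varepsilon$-net for $T(B_X)$ for every $\varepsilon>0$, where $B_X$ denotes the closed unit ball of $X$.

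Fix $\varepsilon>0$. The hypothesis $\|T-T_n\|\to 0$ lets me choose an index $N$ with $\|T-T_N\|<\varepsilon/2$. By assumption $T_N$ is compact, so $T_N(B_X)$ is relatively compact in $Y$ and hence totally bounded: there exist finitely many points $y_1,\dots,y_K\in Y$ with $T_N(B_X)\subset\bigcup_{i=1}^K B_Y(y_i,\varepsilon/2)$. For any $x\in B_X$ I pick an index $i$ with $\|T_N x-y_i\|<\varepsilon/2$ and estimate
$$\|Tx-y_i\|\leq \|Tx-T_N x\|+\|T_N x-y_i\|\leq \|T-T_N\|\,\|x\|+\varepsilon/2<\varepsilon.$$
Thus $\{y_1,\dots,y_K\}$ is a finite $\varepsilon$-net for $T(B_X)$, which yields total boundedness and hence the compactness of $T$.

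An equivalent sequential route would be a diagonal extraction. Given a bounded sequence $\{x_k\}\subset X$, for each fixed $n$ pass to a subsequence along which $T_n x_k$ converges in $Y$; then diagonalise to obtain a single subsequence $\{x_{k_j}\}$ such that $T_n x_{k_j}$ converges in $Y$ for every $n$. The uniform approximation $T_n\to T$, combined with $\sup_k\|x_k\|<\infty$, forces $\{T x_{k_j}\}$ to be Cauchy (the triangle inequality splits the difference into a piece controlled by $\|T-T_n\|$ times the uniform bound on $\|x_k\|$, plus a piece that is small for large $j$ by the convergence of $T_n x_{k_j}$); completeness of $Y$ then gives convergence, so $T$ is sequentially compact.

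There is essentially no analytical obstacle here, which is why the lemma is a standard textbook result. The one point that requires any care is that the approximation $\|Tx-T_N x\|<\varepsilon/2$ must be uniform over $x\in B_X$, and this is exactly what the hypothesis of \emph{uniform} operator convergence supplies; under only strong operator convergence $T_n x\to Tx$ pointwise the $\varepsilon$-net argument collapses, and the conclusion is known to fail in general.
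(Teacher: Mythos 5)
Your proof is correct: both the $\varepsilon$-net argument and the diagonal-subsequence variant are the standard textbook proofs of this fact, and the paper itself gives no proof but simply cites Yosida, whose argument is essentially the one you wrote. Your closing remark that uniform (not merely strong) operator convergence is the essential hypothesis is also accurate.
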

\medskip
We end this section by introducing the general weighted version of Frechet-Kolmogorov
theorems, which was given by Xue, Yabuta and Yan in \cite{XYY}.
\begin{lemma}\label{lem2.8}	{\rm (\rm \cite{XYY})}. Let $w$ be a weight on $\mathbb{R}^n$. Assume that $w^{-1/(p_0-1)}$ is also a weight on $\mathbb{R}^n$ for some $p_0>1$. Let $0<p<\infty$ and $\mathcal{F}$ be a subset in $L^p(w)$, then $\mathcal{F}$ is sequentially compact in $L^p(w)$ if the following three conditions are satisfied:
	\begin{enumerate}[{\rm (i)}]	
		\item $\mathcal{F}$ is bounded, i.e.,
		$\sup\limits_{f\in\mathcal{F}}\|f\|_{L^p(w)}<\infty$;
		\item $\mathcal{F}$ uniformly vanishes at infinity, i.e.,
		\begin{equation*}
		\lim\limits_{N\rightarrow\infty}\sup\limits_{f\in\mathcal{F}}\int_{|x|>N}|f(x)|^pw(x)dx=0;
		\end{equation*}
		\item $\mathcal{F}$ is uniformly equicontinuous, i.e.,
		\begin{equation*}
		\lim\limits_{|h|\rightarrow0}\sup\limits_{f\in\mathcal{F}}\int_{\mathbb{R}^n}|f(\cdot+h)-f(\cdot)|^pw(x)dx=0.
		\end{equation*}
	\end{enumerate}
\end{lemma}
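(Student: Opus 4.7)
The plan is to prove Lemma 2.8 by showing that $\mathcal{F}$ is totally bounded in $L^p(w)$, which is a complete (quasi-)metric space; this suffices for sequential compactness. The scheme is the classical \emph{tail truncation, piecewise-constant approximation, finite-dimensional reduction}: fix $\epsilon>0$; use (ii) to restrict to a bounded set; use (iii) to approximate by block averages to within $\epsilon$; use (i) together with the hypothesis $w^{-1/(p_0-1)} \in L^1_{\mathrm{loc}}$ to show the block averages form a bounded subset of a finite-dimensional space, in which bounded sets are automatically totally bounded.

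First, by (ii), choose $N = N(\epsilon)$ so that $\int_{|x|>N}|f|^p w\,dx < (\epsilon/3)^p$ uniformly in $f \in \mathcal{F}$. Partition $B(0,N+1)$ into a grid of cubes $\{Q_j\}_{j=1}^M$ of side $\delta$, and set $A_\delta f := \sum_{j=1}^M c_j(f)\,\mathbf{1}_{Q_j}$ with $c_j(f):=|Q_j|^{-1}\int_{Q_j} f\,dy$, which lives in the $M$-dimensional subspace $V_\delta := \operatorname{span}\{\mathbf{1}_{Q_j}\}_{j=1}^M$. For $p \geq 1$, the identity $A_\delta f(x) - f(x) = |Q_j|^{-1}\int_{Q_j}(f(y)-f(x))\,dy$ for $x \in Q_j$, combined with Jensen, Fubini, and the change of variable $h = y-x$, yields
$$\|A_\delta f - f\,\mathbf{1}_{B(0,N+1)}\|_{L^p(w)}^p \leq C_n \sup_{|h| \leq \sqrt{n}\delta}\|\tau_h f - f\|_{L^p(w)}^p,$$
and by (iii) the right-hand side is $< (\epsilon/3)^p$ uniformly in $f$ once $\delta$ is small enough. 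For $0 < p < 1$, a subadditive variant (using $|a+b|^p \leq |a|^p + |b|^p$) gives the same conclusion.

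Next I would bound $\{A_\delta f\}_{f\in \mathcal{F}}$ inside $V_\delta$. By H\"older at exponent $p_0$,
$$|c_j(f)| \leq |Q_j|^{-1}\Bigl(\int_{Q_j}|f|^{p_0}\,dy\Bigr)^{1/p_0}|Q_j|^{1/p_0'},$$
and one rewrites the first factor via $\int|f|^{p_0} = \int|f|^{p_0}w\cdot w^{-1}$, then uses H\"older once more together with the hypothesis $w^{-1/(p_0-1)} \in L^1_{\mathrm{loc}}(Q_j)$ and the global bound $\|f\|_{L^p(w)} \leq \Lambda$ from (i), to obtain a finite bound on $|c_j(f)|$ depending only on $\delta$, $N$, and the local data of $w$ and $w^{-1/(p_0-1)}$. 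Since $V_\delta$ is finite-dimensional, any bounded subset is totally bounded, so $\{A_\delta f : f \in \mathcal{F}\}$ admits a finite $(\epsilon/3)$-net $\{g_1,\dots,g_K\}$ in $L^p(w)$.

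To finish, for any $f \in \mathcal{F}$ choose $g_i$ within $\epsilon/3$ of $A_\delta f$; the triangle inequality combined with the tail and approximation estimates gives $\|f-g_i\|_{L^p(w)} < \epsilon$, so $\mathcal{F}$ is totally bounded and hence sequentially compact. \textbf{The main obstacle} is the coefficient bound in Step~3: the hypothesis supplies H\"older only at the single exponent $p_0$, while the ambient integrability is measured in $L^p(w)$, so one must bridge the two either by Jensen-type monotonicity of the condition ``$w^{-1/(q-1)} \in L^1_{\mathrm{loc}}$'' in $q$ (when $p \geq p_0$), or by an interpolation/truncation argument between $L^p(w)$ and $L^{p_0}(Q_j,w)$. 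Extra care is needed in the quasi-Banach range $0 < p < 1$, where Minkowski's inequality is unavailable and the averaging bound above must be justified subadditively.
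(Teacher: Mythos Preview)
The paper does not prove this lemma; it is quoted from \cite{XYY} without argument, so there is nothing in the paper to compare your attempt against. Your outline is the classical total-boundedness route to Fr\'echet--Kolmogorov and is sound when $p\ge p_0$, but two of the steps you yourself flag as obstacles are genuinely unresolved.

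First, for $p<p_0$ the unweighted averages $c_j(f)=|Q_j|^{-1}\int_{Q_j}f$ need not even exist: the hypothesis $w^{-1/(p_0-1)}\in L^1_{\rm loc}$ yields $L^p(w)\hookrightarrow L^1_{\rm loc}$ only when $p\ge p_0$. (In dimension one, take $w(x)=x^\alpha$ near $0$ with $\alpha>0$, any $p_0>\alpha+1$, any $1<p<\alpha+1$, and $f(x)=x^{-\beta}$ with $1<\beta<(\alpha+1)/p$; then $f\in L^p(w)\setminus L^1_{\rm loc}$.) Your H\"older chain for $|c_j(f)|$ produces the factor $\bigl(\int_{Q_j}|f|^{p_0}w\bigr)^{1/p_0}$, which is not controlled by $\|f\|_{L^p(w)}$ when $p<p_0$; the promised ``interpolation/truncation argument'' is not supplied, and something substantive is needed here --- for instance, an approximation scheme adapted to the $L^p(w)$ structure rather than unweighted cube averages. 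Second, for $0<p<1$ the Jensen step $|A_\delta f(x)-f(x)|^p\le |Q_j|^{-1}\int_{Q_j}|f(y)-f(x)|^p\,dy$ is false (the inequality reverses for concave powers), and subadditivity $|a+b|^p\le|a|^p+|b|^p$ does not transfer to integral averages: applied to Riemann sums of mesh $\Delta$ it gives a bound scaling like $\Delta^{p-1}\to\infty$. Minkowski's integral inequality is equally unavailable. The quasi-Banach range therefore needs a genuinely different device, not a ``subadditive variant'' of the same estimate.
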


\bigskip

\section{Proof of Theorem \ref{thm1.1} and \ref{thm1.2} }\label{S3}
\medskip

\begin{proof}[Proof of Theorem \ref{thm1.1}] We shall prove Theorem \ref{thm1.1} via smoothness truncated techniques, which is different from that used in \cite{WX}.  Let $\varphi\in C^{\infty}([0,\infty))$ satisfy
	\begin{eqnarray}\label{3.1}
	0\leq\varphi\leq1\ \ \ and\ \ \
	\varphi(t)=
	\begin{cases}
	1,          &t\in[0,1],\\
	0,          &t\in[2,\infty).
	\end{cases}
	\end{eqnarray}
	For any $\gamma>0$, let
	\begin{equation}\label{3.2}
	q_{t,\gamma}(x,y)=q_t(x,y)(1-\varphi(\frac{\sqrt{t}}{\gamma})),
	\end{equation}
where $q_t$ is the kernel of $\frac{d}{dt}e^{-tL}$. Note that this truncated function essentially just truncates $t$ and does not contribute anything to $x$ and $y$. With the kernel $q_{t,\gamma}$, we define
	\begin{equation}\label{3.3}
	g_\gamma f(x)=\Big(\int_{0}^{\infty}\Big|\int_{\rn}q_{t,\gamma}(x,y)f(y)dy\Big|^2tdt\Big)^{1/2}
	\end{equation}
	and
	\begin{equation}\label{3.4}
	g_{b,\gamma} f(x)=\Big(\int_{0}^{\infty}\Big|\int_{\rn}q_{t,\gamma}(x,y)(b(x)-b(y))f(y)dy\Big|^2tdt\Big)^{1/2}.
	\end{equation}
Then, for any $|h|\leq\sqrt{t}$, by Lemma \ref{lem2.5} and Lemma \ref{lem2.6}, we have
\begin{align}\label{3.5'}
\begin{split}
|q_{t,\gamma}(x,y)|\leq |q_t(x,y)|
\leq \frac{ C_N}{t^{n/2+1}}e^{-\frac{c|x-y|^2}{t}}\Big(1+\frac{\sqrt{t}}{\rho(x)}+\frac{\sqrt{t}}{\rho(y)}\Big)^{-N}
\end{split}
\end{align}
and
\begin{align}\label{3.5}
\begin{split}
|q_{t,\gamma}(x+h,y)-q_{t,\gamma}(x,y)|
&\leq |q_{t}(x+h,y)-q_{t}(x,y)|\\ 
&\leq \frac{ C_N}{t^{n/2+1}}\Big(\frac{|h|}{\sqrt{t}}\Big)^\delta e^{-\frac{c|x-y|^2}{t}}\Big(1+\frac{\sqrt{t}}{\rho(x)}+\frac{\sqrt{t}}{\rho(y)}\Big)^{-N}.
\end{split}
\end{align}

For any $b\in\mathcal{C}_c^\infty(\rn)$ and $\gamma,\theta,\ \eta>0$, by \eqref{3.2}, \eqref{3.4} and Lemma \ref{lem2.5} with $N=\theta\eta$, one has
\begin{align}\label{3.6}
\begin{split}
&|g_{b,\gamma} f(x)-g_{b} f(x)|\\
&\leq C\displaystyle\Big(\int_{0}^{4\gamma^2}\Big(\int_{\mathbb{R}^n}t^{-\frac{n}{2}-1}e^{-\frac{c|x-y|^2}{t}}\Big(1+\frac{\sqrt{t}}{\rho(x)}\Big)^{-\theta\eta}|b(x)-b(y)||f(y)|dy\Big)^2tdt\Big)^{1/2}\\
&\leq C\displaystyle\Big(\int_{0}^{4\gamma^2}\Big(\int_{|x-y|<\sqrt{t}}t^{-\frac{n}{2}-1}e^{-\frac{c|x-y|^2}{t}}\Big(1+\frac{\sqrt{t}}{\rho(x)}\Big)^{-\theta\eta}|b(x)-b(y)||f(y)|dy\Big)^2tdt\Big)^{1/2}\\
&\quad+C\displaystyle\Big(\int_{0}^{4\gamma^2}\Big(\int_{|x-y|\geq\sqrt{t}}t^{-\frac{n}{2}-1}e^{-\frac{c|x-y|^2}{t}}\Big(1+\frac{\sqrt{t}}{\rho(x)}\Big)^{-\theta\eta}|b(x)-b(y)||f(y)|dy\Big)^2tdt\Big)^{1/2}\\
&=:I_1+I_2.
\end{split}
\end{align}
For $I_1$, we have
\begin{align}\label{3.7}
\begin{split}
I_1&\leq C \displaystyle\Big(\int_{0}^{4\gamma^2}\Big(t^{-\frac{n}{2}-1}\sqrt{t}\Big(1+\frac{\sqrt{t}}{\rho(x)}\Big)^{-\theta\eta}\int_{|x-y|<\sqrt{t}}|f(y)|dy\Big)^2tdt\Big)^{1/2}\\
&\leq C \displaystyle\Big(\int_{0}^{4\gamma^2}\Big(t^{-\frac{n}{2}}\Big(1+\frac{\sqrt{t}}{\rho(x)}\Big)^{-\theta\eta}\int_{|x-y|<\sqrt{t}}|f(y)|dy\Big)^2dt\Big)^{1/2}\\
&\leq C\Big(\int_{0}^{4\gamma^2}dt\Big)^{1/2} M_{V,\eta}f(x)\\
&\leq C\gamma M_{V,\eta}f(x).
\end{split}
\end{align}
For $I_2$, using the estimate $e^{-s}\leq\frac{C}{s^{M/2}}$with $M>n+1+\theta\eta$ and splitting to annuli, it follows that
\begin{align}\label{3.8}
\begin{split}
I_2&\leq C\displaystyle\Big(\int_{0}^{4\gamma^2}\Big( t^{\frac{M-n}{2}-1}\Big(1+\frac{\sqrt{t}}{\rho(x)}\Big)^{-\theta\eta}\int_{|x-y|\geq\sqrt{t}}\frac{|b(x+h)-b(y)||f(y)|}{|x-y|^M}dy\Big)^2tdt\Big)^{1/2}\\
&\leq C\displaystyle \Big(\int_{0}^{4\gamma^2}\Big(t^{\frac{M-n}{2}-1}\Big(1+\frac{\sqrt{t}}{\rho(x)}\Big)^{-\theta\eta}\sum_{k=1}^{\infty}\frac{2^k\sqrt{t}}{(2^k\sqrt{t})^M}\int_{|x-y|\sim2^k\sqrt{t}}|f(y)|dy\Big)^2tdt\Big)^{1/2}\\
&\leq C\displaystyle \Big(\int_{0}^{4\gamma^2}\Big(t^{-\frac{n}{2}}\Big(1+\frac{2^k\sqrt{t}}{\rho(x)}\Big)^{-\theta\eta}\sum_{k=1}^{\infty}\frac{2^{-k(M-n-1-\theta\eta)}}{(2^k\sqrt{t})^M}\int_{|x-y|<2^k\sqrt{t}}|f(y)|dy\Big)^2dt\Big)^{1/2}\\
&\leq C\gamma M_{V,\eta}f(x).
\end{split}
\end{align}

Combing \eqref{3.8} with \eqref{3.6} and \eqref{3.7} may lead to
\begin{equation*}
|g_{b,\gamma} (f)(x)-g_{b}(f)(x)|\leq C\gamma M_{V,\eta}f(x).
\end{equation*}
Then, Lemma \ref{lem2.4} with $p'\leq\eta<\infty$ gives that
\begin{equation}\label{3.9}
\|g_{b,\gamma}(f)-g_{b}(f)\|_{L^p(w)}\leq C\gamma\|f\|_{L^p(w)},
\end{equation}
which implies that
\begin{equation}\label{3.10}
\lim_{\gamma\rightarrow 0}\|g_{b,\gamma}(f)-g_b(f)\|_{L^p(w)\rightarrow L^p(w)}=0.
\end{equation}

On the other hand, if $b\in\CMO_\theta(\rho)(\rn)$, then for any $\epsilon>0$, there exists $b_\epsilon\in\mathcal{C}_c^\infty(\rn)$ such that $\|b-b_\epsilon\|_{\BMO_\theta(\rho)}<\epsilon$. Therefore
\begin{equation*}
\|g_b(f)-g_{b_\epsilon} (f)\|_{L^p(w)}\leq\|g_{b-b_\epsilon}(f)\|_{L^p(w)}\leq C\|b-b_\epsilon\|_{\BMO_\theta(\rho)}\|f\|_{L^p(w)}\leq C\epsilon.
\end{equation*}
Thus, to prove $g_b$ is compact on
$L^p(w)$ for any $b\in\CMO_\theta(\rho)$, it suffices to prove that $g_b$ is compact on $L^p(w)$ for any $b\in\mathcal{C}_c^\infty(\mathbb{R}^n)$. By \eqref{3.10} and Lemma \ref{lem2.7}, it suffices to show that $g_{b,\gamma}$ is compact for any $b\in\mathcal{C}_c^\infty(\mathbb{R}^n)$ when $\gamma>0$ is small enough. To this end, for arbitrary bounded set $F$ in $L^p(w)$, let
$$\mathcal{F}=\{g_{b,\gamma}f:f\in F\}.$$
Then, we need to show that for $b\in \mathcal{C}_c^\infty(\mathbb{R}^n)$, $\mathcal{F}$ satisfies the conditions$
\rm (i)$-$\rm(iii)$ of Lemma \ref{lem2.8}.

From the definition of $q_{t,\gamma}$, we know that $g_\gamma f(x)\leq gf(x)$ and $g_{b,\gamma} f(x)\leq g_bf(x)$. Hence, Theorem A and Theorem B still hold for $g_{\gamma}$ and $g_{b,\gamma}$ respectively. Then
\begin{equation*}
\sup\limits_{f\in F}\|g_{b,\gamma}f\|_{L^p(w)}\leq C\sup\limits_{f\in F}\|f\|_{L^p(w)}\leq C,
\end{equation*}
which yields the fact that the set $\mathcal{F}$ is bounded.

To prove that for any $w\in A_p^{\rho,\theta}$, (ii) of Lemma \ref{lem2.8}
\begin{equation}\label{3.11}
\lim_{A\rightarrow\infty}\int_{|x|>A}|g_{b,\gamma}f(x)|^pw(x)dx=0,
\end{equation}
holds for $f\in F$, we need the following claim

\textbf{Claim.} For any $N>0$, there is a constant $C_N>0$ so that 
\begin{equation}\label{3.12}
\Big(\int_{0}^{\infty}|q_{t,\gamma}(x,y)|^2tdt\Big)^{\frac{1}{2}}\leq \frac{C_N}{|x-y|^{n}}\Big(1+\frac{|x-y|}{\rho(x)}\Big)^{-N}.
\end{equation}
In fact, by \eqref{3.5'}, we may decompose the left side of \eqref{3.12} as
\begin{align*}
\displaystyle\Big(\int_{0}^{\infty}|q_{t,\gamma}(x,y)|^2tdt\Big)^{\frac{1}{2}}&\leq\displaystyle C_N\Big(\int_{0}^{|x-y|^2}t^{-n-2}e^{-\frac{c|x-y|^2}{t}}\Big(1+\frac{\sqrt{t}}{\rho(x)}\Big)^{-2N}tdt\Big)^{\frac{1}{2}}\\
&\qquad+\displaystyle C_N\Big(\int_{|x-y|^2}^{\infty}t^{-n-2}e^{-\frac{c|x-y|^2}{t}}\Big(1+\frac{\sqrt{t}}{\rho(x)}\Big)^{-2N}tdt\Big)^{\frac{1}{2}}\\
&=:J_1+J_2.
\end{align*}

In order to estimate $J_1$ and $J_2$,  we need the following inequality: for any $M>0$, there exists a constant $C>0$, such that
\begin{equation}\label{3.13}
e^{-\frac{c|x-y|^2}{t}}\leq C\frac{t^{\frac{M}{2}}}{|x-y|^{M}}.
\end{equation}
For $J_1$, using \eqref{3.13} with $M=n+N+1$,  we obtain
\begin{align*}
J_1&\leq\displaystyle C_N\Big(\int_{0}^{|x-y|^2}t^{-n-2}\Big(\frac{t}{|x-y|^2}\Big)^{n+N+1}\Big(1+\frac{\sqrt{t}}{\rho(x)}\Big)^{-2N}tdt\Big)^{\frac{1}{2}}\\
&\leq C_N\Big(\int_{0}^{|x-y|^2}\frac{1}{|x-y|^{2(n+1)}}\Big(\frac{\sqrt{t}}{|x-y|}\Big)^{2N}\Big(1+\frac{\sqrt{t}}{\rho(x)}\Big)^{-2N}dt\Big)^{\frac{1}{2}}\\
&\leq \frac{C_N}{|x-y|^{n+1}}\Big(\int_{0}^{|x-y|^2}\Big(\frac{\sqrt{t}+\rho(x)}{|x-y|+\rho(x)}\Big)^{2N}\Big(\frac{\sqrt{t}+\rho(x)}{\rho(x)}\Big)^{-2N}dt\Big)^{\frac{1}{2}}\\
&\leq \frac{C_N}{|x-y|^{n}}\Big(1+\frac{|x-y|}{\rho(x)}\Big)^{-N}.
\end{align*}
For $J_2$, using \eqref{3.13} with $M=2(n-1)$,  we get
\begin{align*}
J_2&\leq\displaystyle C_N\Big(\int_{|x-y|^2}^{\infty}t^{-n-1}\Big(\frac{t}{|x-y|^2}\Big)^{n-1}\Big(1+\frac{|x-y|}{\rho(x)}\Big)^{-2N}dt\Big)^{\frac{1}{2}}\\
&\leq \frac{C_N}{|x-y|^{n-1}}\Big(1+\frac{|x-y|}{\rho(x)}\Big)^{-N}\Big(\int_{|x-y|^2}^{\infty}t^{-2}dt\Big)^{\frac{1}{2}}\\
&\leq \frac{C_N}{|x-y|^{n}}\Big(1+\frac{|x-y|}{\rho(x)}\Big)^{-N}.
\end{align*}
Therefore, \eqref{3.12} holds, and this finishes the proof of this claim.

Now we are in the position to use the above claim to prove \eqref{3.11}.
Assume $b\in\mathcal{C}_c^\infty(\mathbb{R}^n)$ and $\supp(b)\subset B(0,R)$, where $B(0,R)$ is the ball of radius $R$ center at origin in $\mathbb{R}^n$. For any $|x| > A > 2R$, $w \in A_p^{\rho,\theta}$, $1<p<\infty$  and $f\in F$.  Then, by the inequality \eqref{3.12}, we have
\begin{equation*}
\begin{split}
|g_{b,\gamma}(f)(x)|&\leq\displaystyle \int_{|y|<R}\Big(\int_{0}^{\infty}|q_{t,\gamma}(x,y)|^2tdt\Big)^{\frac{1}{2}}|b(y)f(y)|dy\\
&\leq C_N\|b\|_\infty\displaystyle \int_{|y|<R}\Big(1+\frac{|x-y|}{\rho(x)}\Big)^{-N}\frac{|f(y)|}{|x-y|^{n}}dy\\
&\leq \frac{C_N\|b\|_\infty}{|x|^{n}}\frac{\|f\|_{L^p(w)}}{(1+1/2|x|m_V(x))^N}\displaystyle \Big(\int_{|y|<R}w^{-p'/p}(y)dy\Big)^{1/p'}.
\end{split}
\end{equation*}
By Lemma \ref{lem2.1}, there exist $k_0\geq 1$ and $C_0>0$ such that
$$m_V(x)\geq C_0m_V(0)(1+|x|m_V(0))^{-\frac{k_0}{k_0+1}}.$$
Therefore, it holds that
\begin{equation*}
\begin{split}
\frac{1}{1+1/2|x|m_V(x)}&\leq\frac{C}{1+|x|m_V(0)(1+|x|m_V(0))^{-\frac{k_0}{k_0+1}}}\\
&\leq\frac{C}{(1+|x|m_V(0))(1+|x|m_V(0))^{-\frac{k_0}{k_0+1}}}\\&=\frac{C}{(1+|x|/\rho(0))^{\frac{1}{k_0+1}}}
\end{split}
\end{equation*}
which leads to
\begin{equation*}
\begin{array}{ll}
{}&\displaystyle\Big(\int_{|x|>A}|g_{b,\gamma}f(x)|^pw(x)dx\Big)^{1/p}\\
&\leq C\displaystyle\sum_{j=0}^{\infty}\frac{1}{(2^jA)^{n}}\frac{1}{(1+2^jA/\rho(0))^{\frac{N}{k_0+1}}}\Big(\int_{2^jA<|x|<2^{j+1}A}w(x)dx\Big)^{1/p}\displaystyle\Big(\int_{|y|<R}w^{-p'/p}(y)dy\Big)^{1/p'}\\
&\leq C\displaystyle\sum_{j=0}^{\infty}\frac{1}{(2^jA)^{n}}\frac{1}{(1+2^jA/\rho(0))^{\frac{N}{k_0+1}}}\Big(\int_{B(0,2^{j+1}A)}w(x)dx\Big)^{1/p}\displaystyle\Big(\int_{B(0,R)}w^{-p'/p}(y)dy\Big)^{1/p'}.
\end{array}
\end{equation*}

Let $Q=B(0,2^{j+1}A)$, $E=B(0,R)$. Since $w\in A_{p}^{\rho,\theta}$, then, by Lemma \ref{lem2.2}, we can get
\begin{equation*}
\begin{array}{ll}
w(5Q)&\leq C\displaystyle\Big(\frac{\Psi_\theta(Q)|Q|}{|E|}\Big)^pw(E)\\
&\leq C\displaystyle w(B(0,R))\Big(\frac{(1+2^{j+1}A/\rho(0))^\theta(2^{j+1}A)^n}{R^n}\Big)^p\\
&\leq C\displaystyle w(B(0,R))(1+2^{j+1}A/\rho(0))^{\theta p}\frac{(2^{j+1}A)^{np}}{R^{np}}
\end{array}
\end{equation*}
Taking $N>(k_0+1)\theta$, we may continue to estimate $\Big(\int_{|x|>A}|g_{b,\gamma}f(x)|^pw(x)dx\Big)^{1/p}$ in the way that
\begin{equation*}
\begin{array}{ll}
&\quad\displaystyle\Big(\int_{|x|>A}|g_{b,\gamma}f(x)f(x)|^pw(x)dx\Big)^{1/p}\\
&\quad\leq C\displaystyle\sum_{j=0}^{\infty}\frac{(2^{j+1}A)^{n}}{(2^jA)^{n}}\frac{(1+2^{j+1}A/\rho(0))^{\theta}}{(1+2^jA/\rho(0))^{\frac{N}{k_0+1}}}\Big(\frac{1}{(1+\frac{R}{\rho(0)})^\theta R^n}\\
&\qquad\displaystyle\times\int_{B(0,R)}w(x)dx\Big)^{1/p}\Big(\frac{1}{(1+\frac{R}{\rho(0)})^\theta R^n}\int_{B(0,R)}w^{-p'/p}(y)dy\Big)^{1/p'}\\
&\quad\leq C\displaystyle\sum_{j=0}^{\infty}\frac{(1+2^{j}A/\rho(0))^{\theta}}{(1+2^jA/\rho(0))^{\frac{N}{k_0+1}}}\\
&\quad\leq  CA^{-(\frac{N}{k_0+1}-\theta)}.
\end{array}
\end{equation*}
Therefore, we obtain 
\begin{equation*}
\lim_{A\rightarrow\infty}\int_{|x|>A}|g_{b,\gamma}f(x)|^pw(x)dx=0,
\end{equation*}
holds uniformly for $f\in F$.

It remains to show that $\rm(iii)$ of Lemma \ref{lem2.8} holds, namely, the set $\mathcal{F}$ is uniformly equicontinuous.
It suffices to verify that
\begin{equation*}
\lim\limits_{|h|\rightarrow0}\|g_{b,\gamma}(f)(h+\cdot)-g_{b,\gamma}(f)(\cdot)\|_{L^p(w)}=0,
\end{equation*}
holds uniformly for $f\in F$.

Taking $\gamma=|h|$ and $|h|\in(0,1)$. When $\sqrt{t}<|h|$, it holds that
$$\varphi(\frac{\sqrt{t}}{\gamma})=\varphi(\frac{\sqrt{t}}{|h|})=1.$$
This implies
$q_{t,\gamma}(x+h,y)=q_{t,\gamma}(x,y)=0.$
Therefore, we have

\begin{align}\label{3.14}
\begin{split}
&|g_b(f)(x+h)-g_b(f)(x)|\\
&\leq \displaystyle\Big(\int_{|h|^2}^{\infty}\Big|\int_{\mathbb{R}^n}q_{t,\gamma}(x+h,y)(b(x+h)-b(y))f(y)dy\\
&\quad-\int_{\mathbb{R}^n}q_{t,\gamma}(x,y)(b(x)-b(y))f(y)dy\Big|^2tdt\Big)^{1/2}\\
&\leq C\displaystyle \Big(\int_{|h|^2}^{\infty}\Big|\int_{\mathbb{R}^n}(q_{t,\gamma}(x+h,y)-q_{t,\gamma}(x,y))
(b(x+h)-b(y))f(y)dy\Big|^2tdt\Big)^{1/2}\\
&\quad+C\displaystyle \Big(\int_{|h|^2}^{\infty}\Big|\int_{\mathbb{R}^n}q_{t,\gamma}(x,y)(b(x+h)-b(x))f(y)dy\Big|^2tdt\Big)^{1/2}\\
&=:L_1+L_2.
\end{split}
\end{align}

For $L_2$, it holds that
\begin{equation}\label{3.15}
L_2(x)=C\displaystyle|b(x+h)-b(x)|g_\gamma(f)(x)\leq C\displaystyle |h|g(f)(x).
\end{equation}
By the $L^p(w)$-bounds of $g$-function, we have
\begin{equation}\label{3.16}
\|L_2\|_{L^p(w)}\leq C|h|\|f\|_{L^p(w)}.
\end{equation}

For ${L}_1$, by \eqref{3.5}, we get
\begin{align}\label{3.17}
\begin{split}
L_1&\leq C\displaystyle|h|^\delta\Big(\int_{|h|^2}^{\infty}\Big|\int_{\mathbb{R}^n} t^{-\frac{n+\delta}{2}-1}e^{-\frac{c|x-y|^2}{t}}\Big(1+\frac{\sqrt{t}}{\rho(x)}+\frac{\sqrt{t}}{\rho(y)}\Big)^{-N}\\
&\quad\times|b(x+h)-b(y)||f(y)|dy\Big|^2tdt\Big)^{1/2}\\
&\leq C\displaystyle |h|^\delta\Big\{\Big(\int_{|h|^2}^{1}\Big|\int_{|x-y|\geq\rho(x)} t^{-\frac{n+\delta}{2}-1}e^{-\frac{c|x-y|^2}{t}}\Big(1+\frac{\sqrt{t}}{\rho(x)}\Big)^{-N}\\
&\quad\times|b(x+h)-b(y)||f(y)|dy\Big|^2\Big)^{1/2}\\
&\quad+\displaystyle\Big(\int_{1}^{\infty}\Big|\int_{|x-y|\geq\rho(x)} t^{-\frac{n+\delta}{2}-1}e^{-\frac{c|x-y|^2}{t}}\Big(1+\frac{\sqrt{t}}{\rho(x)}\Big)^{-N}\\
&\quad\times|b(x+h)-b(y)||f(y)|dy\Big|^2\Big)^{1/2}\\
&\quad+\displaystyle\Big(\int_{|h|^2}^{1}\Big|\int_{|x-y|<\rho(x)} t^{-\frac{n+\delta}{2}-1}e^{-\frac{c|x-y|^2}{t}}\Big(1+\frac{\sqrt{t}}{\rho(x)}\Big)^{-N}\\
&\quad\times|b(x+h)-b(y)||f(y)|dy\Big|^2\Big)^{1/2}\\
&\quad+\displaystyle\Big(\int_{1}^{\infty}\Big|\int_{|x-y|<\rho(x)} t^{-\frac{n+\delta}{2}-1}e^{-\frac{c|x-y|^2}{t}}\Big(1+\frac{\sqrt{t}}{\rho(x)}\Big)^{-N}\\
&\quad\times|b(x+h)-b(y)||f(y)|dy\Big|^2\Big)^{1/2}\Big\}\\
&=:C|h|^\delta\Big\{L_{11}+L_{12}+L_{13}+L_{14}\Big\}.
\end{split}
\end{align}
For $L_{11}$, if $t<1$, then $t^{-\delta/2}<t^{-1/2}$.  If $|x-y|<2^k\rho(x)$, $k=1,2,\cdots$, and $|h|\leq\sqrt{t}$, then
$$|b(x+h)-b(x)|\leq C(2^k\rho(x)+\sqrt{t}).$$
Using \eqref{3.13} and splitting the integrand domain into annuli, we obtain 
\begin{align*}
&\displaystyle\Big(1+\frac{\sqrt{t}}{\rho(x)}\Big)^{-N}\int_{|x-y|\geq\rho(x)} t^{-\frac{n+\delta}{2}-1}e^{-\frac{c|x-y|^2}{t}}|b(x+h)-b(y)||f(y)|dy\\
&\leq C\displaystyle\Big(1+\frac{\sqrt{t}}{\rho(x)}\Big)^{-N}t^{\frac{M-n-1}{2}-1}\sum_{k=1}^{\infty}\int_{2^{k-1}\rho(x)\leq|x-y|<2^k\rho(x)}\frac{|f(y)|}{|x-y|^M}\\
&\quad\times|b(x+h)-b(y)|dy\\
&\leq C\displaystyle\Big(1+\frac{\sqrt{t}}{\rho(x)}\Big)^{-N}t^{\frac{M-n-1}{2}-1}\sum_{k=1}^{\infty}\frac{2^k\rho(x)+\sqrt{t}}{(2^k\rho(x))^M}\int_{|x-y|<2^k\rho(x)}|f(y)|dy\\
&\leq C\displaystyle\Big(1+\frac{\sqrt{t}}{\rho(x)}\Big)^{-N}t^{-1}\Big\{\Big(\frac{\sqrt{t}}{\rho(x)}\Big)^{M-n-1}\sum_{k=1}^{\infty}\frac{2^{-k(M-n-1-\theta\eta)}}{2^{k\theta\eta}(2^k\rho(x))^n}\\
&\quad+\displaystyle\Big(\frac{\sqrt{t}}{\rho(x)}\Big)^{M-n}\sum_{k=1}^{\infty}\frac{2^{-k(M-n-\theta\eta)}}{2^{k\theta\eta}(2^k\rho(x))^n}\Big\}\int_{|x-y|<2^k\rho(x)}|f(y)|dy\\
&\leq C\displaystyle t^{-1}\Big(1+\frac{\sqrt{t}}{\rho(x)}\Big)^{-N+1}\Big(\frac{\sqrt{t}}{\rho(x)}\Big)^{M-n-1}M_{V,\eta}f(x).
\end{align*}
Choosing $M$ and $N$ such that $N>M>n+1+\theta\eta$, then it yields that
\begin{align}\label{3.18}
\begin{split}
L_{11}&\leq C\Big(\int_{0}^{1}\Big(1+\frac{\sqrt{t}}{\rho(x)}\Big)^{-2(N-1)}\Big(\frac{\sqrt{t}}{\rho(x)}\Big)^{2(M-n-1)}\Big)^{1/2}M_{V,\eta}f(x)\\
&\leq CM_{V,\eta}f(x).
\end{split}
\end{align}

For $L_{12}$, if $t>1$, then $t^{-\delta/2}<1$. Hence
\begin{align*}
&\displaystyle\Big(1+\frac{\sqrt{t}}{\rho(x)}\Big)^{-N}\int_{|x-y|\geq\rho(x)} t^{-\frac{n+\delta}{2}-1}e^{-\frac{c|x-y|^2}{t}}|b(x+h)-b(y)||f(y)|dy\\
&\leq C\displaystyle\Big(1+\frac{\sqrt{t}}{\rho(x)}\Big)^{-N}t^{\frac{M-n}{2}-1}\sum_{k=1}^{\infty}\frac{1}{(2^k\rho(x))^M}\int_{|x-y|<2^k\rho(x)}|f(y)|dy\\
&\leq C\displaystyle t^{-1}\Big(\frac{\sqrt{t}}{\rho(x)}\Big)^{M-n}\Big(1+\frac{\sqrt{t}}{\rho(x)}\Big)^{-N}\sum_{k=1}^{\infty}\frac{2^{-k(M-n-\theta\eta)}}{2^{k\theta\eta}(2^k\rho(x))^n}\\
&\quad\times\int_{|x-y|<2^k\rho(x)}|f(y)|dy\\
&\leq C\displaystyle t^{-1}\Big(\frac{\sqrt{t}}{\rho(x)}\Big)^{M-n}\Big(1+\frac{\sqrt{t}}{\rho(x)}\Big)^{-N}M_{V,\eta}f(x).
\end{align*}
Choosing $M$ and $N$ such that $N>M>n+\theta\eta$, we have
\begin{align}\label{3.19}
\begin{split}
L_{12}\leq &C\Big(\int_{1}^{\infty}\Big(\frac{\sqrt{t}}{\rho(x)}\Big)^{2(M-n)}\Big(1+\frac{\sqrt{t}}{\rho(x)}\Big)^{-2N}\Big)^{1/2}M_{V,\eta}f(x)\\
&\leq CM_{V,\eta}f(x).
\end{split}
\end{align}

For $L_{13}$, decompose $L_{13}$ into the following three parts
\begin{align}\label{3.20}
\begin{split}
L_{13}&\leq C\displaystyle\Big(\int_{|h|^2}^{\rho(x)^2}\Big(\int_{|x-y|<\sqrt{t}} t^{-\frac{n+\delta}{2}-1}e^{-\frac{c|x-y|^2}{t}}\Big(1+\frac{\sqrt{t}}{\rho(x)}\Big)^{-N}\\
&\quad\times|b(x+h)-b(y)||f(y)|dy\Big)^2tdt\Big)^{1/2}\\
&\quad+C\displaystyle\Big(\int_{|h|^2}^{\rho(x)^2}\Big(\int_{\sqrt{t}\leq|x-y|<\rho(x)} t^{-\frac{n+\delta}{2}-1}e^{-\frac{c|x-y|^2}{t}}\Big(1+\frac{\sqrt{t}}{\rho(x)}\Big)^{-N}\\
&\quad\times|b(x+h)-b(y)||f(y)|dy\Big)^2tdt\Big)^{1/2}\\
&\quad+C\displaystyle\Big(\int_{\rho(x)^2}^{1}\Big(\int_{|x-y|<\rho(x)} t^{-\frac{n+\delta}{2}-1}e^{-\frac{c|x-y|^2}{t}}\Big(1+\frac{\sqrt{t}}{\rho(x)}\Big)^{-N}\\
&\quad\times|b(x+h)-b(y)||f(y)|dy\Big)^2tdt\Big)^{1/2}\\
&=:L^1_{13}+L^2_{13}+L^3_{13}.
\end{split}
\end{align}
If $|x-y|<\sqrt{t}$, $t<\rho(x)^2<1$ and $|h|\leq\sqrt{t}$, then $|b(x+h)-b(x)|\leq C\sqrt{t}.$
This leads to
\begin{align}\label{3.21}
\begin{split}
L^1_{13}&\leq C \displaystyle\Big(\int_{|h|^2}^{\rho(x)^2}\Big(\int_{|x-y|<\sqrt{t}} t^{-\frac{n+\delta+1}{2}}|f(y)|dy\Big)^2tdt\Big)^{1/2}\\
&\leq C \displaystyle\Big(\int_{|h|^2}^{\rho(x)^2}t^{-\delta}\Big(t^{-\frac{n}{2}}\int_{|x-y|<\sqrt{t}} |f(y)|dy\Big)^2dt\Big)^{1/2}\\
&\leq C \displaystyle\Big(\int_{0}^{\rho(x)^2}t^{-\delta}dt\Big)^{1/2}M_{V,\eta}f(x)\\
&\leq CM_{V,\eta}f(x).
\end{split}
\end{align}
If $|x-y|<2^k\sqrt{t}$, $k=1,\cdots,n$, $t<\rho(x)^2<1$ and $|h|\leq\sqrt{t}$, then $$|b(x+h)-b(x)|\leq C2^k\sqrt{t},\ \ \ \ \  \frac{\sqrt{t}}{\rho(x)}<1.$$
Choosing $M, N>0$ such that $N>M>n+1+\theta\eta$, and applying \eqref{3.13} again, we obtain
\begin{align}\label{3.22}
\begin{split}
L^2_{13}&\leq C \displaystyle\Big(\int_{|h|^2}^{\rho(x)^2}\Big(t^{\frac{M-n-\delta}{2}-1}\sum_{k=1}^{\infty}\frac{2^k\sqrt{t}}{(2^k\sqrt{t)^M}}\int_{2^{k-1}\sqrt{t}\leq|x-y|<2^k\sqrt{t}}|f(y)|dy\Big)^2tdt\Big)^{1/2}\\
&\leq C \displaystyle\Big(\int_{|h|^2}^{\rho(x)^2}t^{-\delta}\Big(\sum_{k=1}^{\infty}\frac{2^{-k(M-n-1-\theta\eta)}}{2^{k\theta\eta}(2^k\sqrt{t})^n}\int_{|x-y|<2^k\sqrt{t}} |f(y)|dy\Big)^2dt\Big)^{1/2}\\
&\leq C \displaystyle\Big(\int_{0}^{\rho(x)^2}t^{-\delta}dt\Big)^{1/2}M_{V,\eta}f(x)\\
&\leq CM_{V,\eta}f(x).
\end{split}
\end{align}
If $|x-y|<\rho(x)$, $\rho(x)^2<t<1$ and $|h|\leq\sqrt{t}$, then $|b(x+h)-b(x)|\leq C\sqrt{t}$ and $ \frac{\sqrt{t}}{\rho(x)}<1.$
Therefore
\begin{align}\label{3.23}
\begin{split}
L^3_{13}&\leq C \displaystyle\Big(\int_{\rho(x)^2}^{1}\Big(\int_{|x-y|<\rho(x)} t^{-\frac{n+\delta+1}{2}}|f(y)|dy\Big)^2tdt\Big)^{1/2}\\
&\leq C \displaystyle\Big(\int_{\rho(x)^2}^{1}t^{-\delta}\Big(t^{-\frac{n}{2}}\int_{|x-y|<\rho(x)} |f(y)|dy\Big)^2dt\Big)^{1/2}\\
&\leq C \displaystyle\Big(\int_{\rho(x)^2}^{1}t^{-\delta}\Big(\rho(x)^{-n}\int_{|x-y|<\rho(x)} |f(y)|dy\Big)^2dt\Big)^{1/2}\\
&\leq C \displaystyle\Big(\int_{0}^{1}t^{-\delta}dt\Big)^{1/2}M_{V,\eta}f(x)\\
&\leq CM_{V,\eta}f(x).
\end{split}
\end{align}
This inequality, combining with \eqref{3.20}, \eqref{3.21} and \eqref{3.22} yields that
\begin{equation}\label{3.24}
L_{13}\leq CM_{V,\eta}f(x).
\end{equation}

For $L_{14}$, decompose it into the following three parts
\begin{align}\label{3.25}
\begin{split}
L_{14}&\leq C\displaystyle\Big(\int_{1}^{\rho(x)^2}\Big(\int_{|x-y|<\sqrt{t}} t^{-\frac{n+\delta}{2}-1}e^{-\frac{c|x-y|^2}{t}}\Big(1+\frac{\sqrt{t}}{\rho(x)}\Big)^{-N}\\
&\quad\times|b(x+h)-b(y)||f(y)|dy\Big)^2tdt\Big)^{1/2}\\
&\quad+C\displaystyle\Big(\int_{1}^{\rho(x)^2}\Big(\int_{\sqrt{t}\leq|x-y|<\rho(x)} t^{-\frac{n+\delta}{2}-1}e^{-\frac{c|x-y|^2}{t}}\Big(1+\frac{\sqrt{t}}{\rho(x)}\Big)^{-N}\\
&\quad\times|b(x+h)-b(y)||f(y)|dy\Big)^2tdt\Big)^{1/2}\\
&\quad+C\displaystyle\Big(\int_{\rho(x)^2}^{\infty}\Big(\int_{|x-y|<\rho(x)} t^{-\frac{n+\delta}{2}-1}e^{-\frac{c|x-y|^2}{t}}\Big(1+\frac{\sqrt{t}}{\rho(x)}\Big)^{-N}\\
&\quad\times|b(x+h)-b(y)||f(y)|dy\Big)^2tdt\Big)^{1/2}\\
&=:L^1_{14}+L^2_{14}+L^3_{14}.
\end{split}
\end{align}
It is easy to see that
\begin{align}\label{3.26}
\begin{split}
L^1_{14}&\leq C \displaystyle\Big(\int_{1}^{\rho(x)^2}\Big(\int_{|x-y|<\sqrt{t}} t^{-\frac{n+\delta}{2}-1}|f(y)|dy\Big)^2tdt\Big)^{1/2}\\
&\leq C \displaystyle\Big(\int_{1}^{\rho(x)^2}t^{-(1+\delta)}\Big(t^{-\frac{n}{2}}\int_{|x-y|<\sqrt{t}} |f(y)|dy\Big)^2dt\Big)^{1/2}\\
&\leq C \displaystyle\Big(\int_{1}^{\rho(x)^2}t^{-(1+\delta)}dt\Big)^{1/2}M_{V,\eta}f(x)\\
&\leq CM_{V,\eta}f(x).
\end{split}
\end{align}
Using \eqref{3.13} with  $M>n+\theta\eta$, we get
\begin{align}\label{3.27}
\begin{split}
L^2_{14}&\leq C \displaystyle\Big(\int_{1}^{\rho(x)^2}t^{-(1+\delta)}\Big(t^{\frac{M-n}{2}}\sum_{k=1}^{\infty}\int_{2^{k-1}\sqrt{t}\leq|x-y|<2^k\sqrt{t}}\frac{|f(y)|}{|x-y|^M}dy\Big)^2dt\Big)^{1/2}\\
&\leq C \displaystyle\Big(\int_{1}^{\rho(x)^2}t^{-(1+\delta)}\Big(\sum_{k=1}^{\infty}\frac{2^{-k(M-n-\theta\eta)}}{2^{k\theta\eta}(2^k\sqrt{t})^n}\int_{|x-y|<2^k\sqrt{t}} |f(y)|dy\Big)^2dt\Big)^{1/2}\\
&\leq C \displaystyle\Big(\int_{1}^{\rho(x)^2}t^{-(1+\delta)}dt\Big)^{1/2}M_{V,\eta}f(x)\\
&\leq CM_{V,\eta}f(x).
\end{split}
\end{align}
If $t\geq 1$, then $t^{-\delta/2}\leq 1$. Taking $N=1$, we have $(\sqrt{t}/\rho(x))^{-2N}=\rho(x)^2/t$, therefore
\begin{align}\label{3.28}
\begin{split}
L^3_{14}&\leq C \displaystyle\Big(\int_{\rho(x)^2}^{\infty}\Big(\frac{\rho(x)}{t}\Big)^2\Big(\int_{|x-y|<\rho(x)} t^{-\frac{n}{2}}|f(y)|dy\Big)^2dt\Big)^{1/2}\\
&\leq C \displaystyle\Big(\int_{\rho(x)^2}^{\infty}\Big(\frac{\rho(x)}{t}\Big)^2\Big(\rho(x)^{-n}\int_{|x-y|<\rho(x)} |f(y)|dy\Big)^2dt\Big)^{1/2}\\
&\leq C \displaystyle\Big(\int_{\rho(x)^2}^{\infty}\Big(\frac{\rho(x)}{t}\Big)^2dt\Big)^{1/2}M_{V,\eta}f(x)\\
&\leq CM_{V,\eta}f(x).
\end{split}
\end{align}
Sum up \eqref{3.25}-\eqref{3.28} in all, we have
\begin{equation}\label{3.29}
L_{14}\leq CM_{V,\eta}f(x).
\end{equation}
Inequality \eqref{3.29} together with \eqref{3.24}, \eqref{3.19}, \eqref{3.18} and \eqref{3.17} gives that
\begin{equation}\label{3.30}
L_1\leq C|h|^\delta M_{V,\eta}f(x).
\end{equation}
By Lemma \ref{lem2.4} for any $p'\leq\eta<\infty$, we get
\begin{equation}\label{3.31}
\|L_1\|_{L^p(w)}\leq C|h|^\delta\|M_{V,\eta}f\|_{L^p(w)}\leq C|h|^\delta\|f\|_{L^p(w)}.
\end{equation}
which together with \eqref{3.14} and \eqref{3.15} yields that
$$
\|g_bf(h+\cdot)-g_bf(\cdot)\|_{L^p(w)}\leq C(|h|+|h|^\delta)\|f\|_{L^p(w)}.
$$
This completes the proof of Theorem \ref{thm1.1}.
\end{proof}

\medskip
\begin{proof}[Proof of Theorem \ref{thm1.2}]
We will complete the proof of Theorem \ref{thm1.2} by slightly modifying the proof in \cite{WX}. We only give the proof of the modified part here.
Since the condition  (\ref{1.8}) was used only in the step to prove for $w\in A_p^{\rho,\theta}$,
	\begin{equation}\label{3.32}
	\lim_{A\rightarrow\infty}\int_{|x|>A}|\mathcal{T}^*_{b,\gamma}f(x)|^pw(x)dx=0,
	\end{equation}
	whenever $f\in F$.  Therefore it suffices to prove (\ref{3.32}) without the condition (\ref{1.8}). As the proof of Theorem \ref{thm1.1}, it suffices to show that for any $N>0$, there is a constant $C_N>0$ so that
	\begin{equation}\label{3.33}
	\sup_{t>0}k_t(x,y)\leq \frac{C_N}{|x-y|^{n}}\Big(1+\frac{|x-y|}{\rho(x)}\Big)^{-N}.
	\end{equation}
	In fact, we may control the left side of \eqref{3.33} by
	\begin{align*}
	\begin{split}
	\sup_{t>0}k_t(x,y)&\leq C_N\sup_{t>0} t^{-\frac{n}{2}}e^{-\frac{|x-y|^2}{5t}}\Big(1+\frac{\sqrt{t}}{\rho(x)}+\frac{\sqrt{t}}{\rho(y)}\Big)^{-N}\\
	&\leq C_N\sup_{\sqrt{t}\geq|x-y|}t^{-\frac{n}{2}}e^{-\frac{|x-y|^2}{5t}}\Big(1+\frac{\sqrt{t}}{\rho(x)}\Big)^{-N}\\
	&\quad+C_N\sup_{\sqrt{t}<|x-y|}t^{-\frac{n}{2}}e^{-\frac{|x-y|^2}{5t}}\Big(1+\frac{\sqrt{t}}{\rho(x)}\Big)^{-N}\\
	&=I+J.
	\end{split}
	\end{align*}
	We need to consider the contributions of  $I$ and $J$. For $I$, using the estimate $e^{-s}\leq\frac{C}{s^{n/2}}$, we have
	$$I\leq \sup_{\sqrt{t}\geq|x-y|}\frac{C_N}{|x-y|^n}\Big(1+\frac{\sqrt{t}}{\rho(x)}\Big)^{-N}\leq\frac{C_N}{|x-y|^{n}}\Big(1+\frac{|x-y|}{\rho(x)}\Big)^{-N}.\\$$
	For $J$, using the estimate $e^{-s}\leq\frac{C}{s^{n/2+N/2}}$, it follows that
	\begin{align*}
	\begin{split}
	J&\leq C_N\sup_{\sqrt{t}<|x-y|} t^{-\frac{n}{2}}\Big(\frac{t}{|x-y|^2}\Big)^{\frac{n}{2}+\frac{N}{2}}\Big(1+\frac{\sqrt{t}}{\rho(x)}\Big)^{-N}\\
	&= \sup_{\sqrt{t}<|x-y|}\frac{C_N}{|x-y|^n}\Big(\frac{\sqrt{t}}{|x-y|}\Big)^{N}\Big(1+\frac{\sqrt{t}}{\rho(x)}\Big)^{-N}\\
	&\leq \sup_{\sqrt{t}<|x-y|}\frac{C_N}{|x-y|^n}\Big(\frac{\sqrt{t}+\rho(x)}{|x-y|+\rho(x)}\Big)^{N}\Big(\frac{\sqrt{t}+\rho(x)}{\rho(x)}\Big)^{-N}\\
	&=\frac{C_N}{|x-y|^{n}}\Big(1+\frac{|x-y|}{\rho(x)}\Big)^{-N}.
	\end{split}
	\end{align*}
	Therefore
	\begin{equation*}
	\sup_{t>0}k_t(x,y)\leq \frac{C_N}{|x-y|^{n}}\Big(1+\frac{|x-y|}{\rho(x)}\Big)^{-N},
	\end{equation*}
	holds for any $N>0$. Then, similar to the proof of Theorem \ref{thm1.1}, we may show that  \eqref{3.32} holds. This finishes the proof of Theorem \ref{thm1.2}.

\end{proof}

\end{document}